\begin{document}

\title{What can be estimated? Identifiability, estimability, causal inference and ill-posed inverse problems}

\author{\name Oliver J. Maclaren \email oliver.maclaren@auckland.ac.nz \\
       \addr Department of Engineering Science\\
       University of Auckland\\
       Auckland 1142, New Zealand.
       \AND
       \name Ruanui Nicholson \email ruanui.nicholson@auckland.ac.nz \\
       \addr  Department of Engineering Science\\
       University of Auckland\\
       Auckland 1142, New Zealand.}

\editor{TBD}

\maketitle


\begin{abstract}
    We consider basic conceptual questions concerning the relationship between statistical estimation and causal inference. Firstly, we show how to translate causal inference problems into an abstract statistical formalism without requiring any structure beyond an arbitrarily-indexed family of probability models. The formalism is simple but can incorporate a variety of causal modelling frameworks, including `structural causal models', but also models expressed in terms of, e.g., differential equations. We focus primarily on the structural/graphical causal modelling literature, however. Secondly, we consider the extent to which causal and statistical concerns can be cleanly separated, examining the fundamental question: `What can be estimated from data?'. We call this the problem of estimability. We approach this by analysing a standard formal definition of `can be estimated' commonly adopted in the causal inference literature -- identifiability -- in our abstract statistical formalism. We use elementary category theory to show that identifiability implies the existence of a Fisher-consistent estimator, but also show that this estimator may be discontinuous, and thus unstable, in general. This difficulty arises because the causal inference problem is, in general, an ill-posed inverse problem. Inverse problems have three conditions which must be satisfied to be considered well-posed: existence, uniqueness, and stability of solutions. Here identifiability corresponds to the question of uniqueness; in contrast, we take estimability to mean satisfaction of all three conditions, i.e. well-posedness. Lack of stability implies that naive translation of a causally identifiable quantity into an achievable statistical estimation target may prove impossible. Our article is primarily expository and aimed at unifying ideas from multiple fields, though we provide new constructions and proofs.
\end{abstract}

\maketitle

\begin{keywords}
identifiability,
estimability,
causal inference,
structural causal models,
inverse problems,
stability,
robust statistics,
statistical learning theory,
sensitive parameters,
applied category theory
\end{keywords}


\section{Introduction}
A common idea in much of the causal inference literature \citep[see e.g.][and related work]{Pearl2009-qh} is that there is a natural separation of concerns between causal inference and statistical estimation of the form: 

\begin{itemize}
    \item First determine what can be estimated \textit{in principle} from the data using formal logics of identifiability analysis.
    \item Then call up the Statistics department to help design a \textit{particular} efficient (or otherwise desirable) estimator. 
\end{itemize}
Furthermore, it is often assumed that statistical and causal concepts should be kept separate because the standard formalisms of statistics cannot naturally express or accommodate causal information, unless additional concepts such as potential outcomes are included or new formalisms such as structural causal models are adopted \citep{Pearl2009-qh}.

Here we examine the extent to which this separation is required and/or desirable. Firstly, we show how a general abstract statistical formalism is capable of accommodating `causal queries' arising from a diversity of causal modelling formalisms, including so-called structural and graphical causal models. While we place particular emphasis on accommodating structural/graphical causal modelling, we also deliberately aim to not restrict ourselves to this particular causal modelling formalism: for example, this framework can also accommodate causal modelling approaches based on differential equations or individual-based models. In fact, our main inspiration for introducing this formalism comes from work on inverse problems where causal models from the physical sciences, typically expressed as differential equations, are combined with statistical estimation using noisy, indirect measurements of the quantities of interest \citep{Evans2002-as}. We then consider a key purported dividing line between statistical and causal questions in detail: identifiability. This is because in much of the causal inference literature, particularly that focused on formal logics of causal inference, identifiability of a quantity is taken to be synonymous with `estimable from data'. For example \citet[][p. 583]{Pearl2014-vp} give the following definition and description:

\begin{quote}
The following definition \textbf{captures the requirement that $Q$ be estimable from the data} [emphasis ours]:

\begin{definition}[Identifiability]
Causal query $Q(M)$ is identifiable, given a set of assumptions $A$, if for any two (fully-specified) models, $M_1$ and $M_2$, that satisfy $A$, we have

$$ P (M_1) = P (M_2) ⇒ Q(M_1) = Q(M_2).$$
\end{definition}
\end{quote}
(In the above, $P(M_1), P(M_2)$ denote the probability distributions \textit{implied by} the causal models $M_1, M_2$, rather than the probabilities \textit{of} the models.)

The implied equivalence between the notion `can be estimated from data' and the formalised concept of identifiability has led, it seems, to a strong focus in much of the formal causal literature on identification conditions for various quantities (or `causal queries'), and is at the heart of the idea that a `separation of concerns' between statistical and causal modelling is both possible and desirable. The culmination of this focus on identifiability in formal causal logics consists of proofs of the completeness of the do-calculus for answering identification questions \citep{shpitser2006identification,huang2008completeness}. Impressively, these identifiability results are \textit{entirely independent of the particular functional form of the causal model} (other than e.g. which variables depend on which) and thus can be fully summarised in minimalist `nonparametric' structural equations or graphical terms. The latter consist of directed acyclic graphs (DAGs).

In the present work we re-examine the fundamental mathematical ideas behind the assumed close connection between identifiability and `can be estimated from data'. We further abstract a statistical estimation framework used to express inverse problems as statistical estimation problems \citep{Evans2002-as}, and prove a number of basic results in a very general form using elementary category theory. In particular, we use this framework to formally prove results connecting identifiability to the existence of functional estimators. At the same time, we show that these estimators lack stability and/or statistical guarantees and in fact illustrate that identifiability is an inadequate characterisation of the more general concept of `can be estimated from the data' (even in principle), or what we will call \textit{estimability}. We hence give a more appropriate characterisation of estimability and illustrate the difference between this and identifiability with simple concrete examples. These results imply that, while identifiability conditions hold independently of the particular function classes to which causal models belong, estimability (even in principle) requires restrictions on this class. An alternative interpretation is that one \textit{only answer a much more restricted set of causal questions} than is usually supposed, where these restrictions are more stringent than those imposed by the requirement of identifiability. Thus we find that, in general, we cannot trust identifiability results to tell us what can be estimated, or which causal questions are answerable, without knowing more about the causal functions involved than is usually assumed. We also briefly consider the more philosophical implications of these conclusions.



\section{Overview and related work}
There are typically two qualitatively distinct directions of reasoning involved statistics and causal inference: the first is the \textit{forward} direction, from theories to observable consequences; the second is the \textit{inverse}, or \textit{inferential}, direction, from observable effects to compatible theories. In this work, we explicitly distinguish the spaces of `causal theories' and `observable data', and the directions of the mappings or relations between them. In particular, we establish explicit results connecting conditions on mappings or relations in one direction to properties of mappings or relations in the other. We distinguish existence and stability results. This is important, because while identifiability is formally characterised as the \textit{injectivity} of a mapping or relation in the forward direction, and this in turn implies the \textit{existence} of an inverse mapping or estimator in the reverse direction, this does not guarantee the \textit{continuity} of the inverse. We argue, however, that continuity of the estimator is necessary for estimability, even in principle. That is, estimability fundamentally depends on the continuity properties of the associated estimator, considered at the target population distribution. Without continuity, estimation is impossible with anything less than perfect knowledge of the data distribution, which we would argue violates the concept of ‘able to be estimated from data’.

Simple examples illustrating the gap between identifiability and estimability have been known for a long time. While we will mostly take an inverse problems perspective on these issues, they are also essentially semi-classical results of statistical estimation theory, with strong origins in robust statistics and the functional-analytic point of view. In particular, the `impossibility' results of \citet[][impossibility of estimating the mean]{Bahadur1956-ms}, \citet[][impossibility/possibility of estimating upper/lower bounds for functionals of a density measuring `complexity' of some sort]{Donoho1988-zf} and \citet[][characterisation of `sensitive parameters']{Tibshirani1988-jr} illustrate the basic issues, and are strongly connected to results from robust statistics \citep[see e.g.][]{Hampel2011-cb,Huber2011robust}. Importantly, these impossibility results can be considered as defining \textit{intrinsic} barriers to estimability. In the words of \citet[][p. 186]{Tibshirani1988-jr}, these barriers to estimability are

\begin{quote}
    an intrinsic property of a [quantity], distinct from sampling properties of any estimator.
\end{quote}

Alternatively, these barriers may be thought of providing \textit{ideal bounds} on the possible sampling behaviour of any estimator. Thus, although we frequently use the term `estimator', we are in a sense considering intrinsic `sensitivity' properties of the \textit{parameter} associated with the population value of the estimator. This motivates our choice to work with \textit{functional} estimators, discussed in detail below.

In classical inverse problems theory, and the functional analysis literature more generally, these examples amount to a failure to satisfy the third Hadamard condition \citep{hadamard1902problemes} of a well-posed problem: that of stability of the solution to an inverse problem, \textit{despite satisfaction the second, and possibly first, conditions of uniqueness and existence, respectively}. Here estimability amounts to a problem possessing a `well-posed' solution (i.e. including a stability condition) while identifiability only amounts to requiring uniqueness. 

Tikhonov and others \citep[see e.g.][]{tikhonov1977solutions} developed regularisation theory for ill-posed operator equations to overcome these issues. As noted by \citet{Bickel2006-zi}, this work actually preceded related developments in statistics. In fact, \citet{Donoho1988-zf} also notes that many of his observations were essentially preceded by work on inverse problems in the geophysical literature. Vapnik, Valiant and others \citep[see e.g.][]{vapnik2013nature,vapnik1999overview,Valiant1984-vq,Poggio2004-on,Vito2005-xi} have related and extended some results of (traditionally deterministic) inverse problems to stochastic ill-posed problems and theories of \textit{statistical learnability}. Similar results have also reappeared recently in the context of the instability of Bayesian conditioning \citep{Owhadi2015-sv,Owhadi2015-wy}. \citet{Yu2013-ml} argues for `stability' as a fundamental concept in statistics. There is also a rich tradition of considering the link between identification and estimability concepts in the econometrics/economics literature. In this setting, these issues have been studied under the names \textit{identification at infinity}, \textit{irregular identification}, and/or \textit{ill-posed identification} \citep[see e.g.][for a comprehensive review of identification concepts in econometrics]{Lewbel2016-sk}. The article by \citet{Horowitz2014-us} in particular appears to have a very similar philosophy to ours, discussing economics estimation problems from the perspective of ill-posed inverse problems. \cite{schulman2016stability} provides a realistic example of potential stability issues in causal inference. Finally, our results are very much consistent with those of \citet[][p. 491]{Robins2003-rm} who show, for DAG models, 

\begin{quote}
    the nonexistence of valid, consistent confidence intervals for causal effects and the nonexistence of uniformly consistent point estimators.
\end{quote}
As mentioned above, however, we take a different approach, inspired by the inverse problems literature and an abstract statistical estimation framework that allows for a broad class of `causal modelling' approaches such as those used in the physical sciences. We also focus explicit attention on the separate, formal characterisation of identifiability, its consequences, and the disconnect between identifiability and estimability.





\section{Formalism}
\citet{Evans2002-as} show how much of the machinery of both inverse problems and statistics can subsumed as instances of a general abstract statistical estimation formalism. We largely follow their formalism, though we present a number of basic concepts in terms of elementary category theory as this allows many of the definitions and proofs to be reduced to simple, essentially algebraic, manipulations, while also applying equally to linear and nonlinear problems. This is strongly influenced by the presentation of a `unified theory' of generalised inverses for operator equations defined in various algebraic structures, as developed by \citet{NASHED1976}. We note in particular, as they do, that the \textit{existence} and \textit{uniqueness} results discussed are essentially \textit{algebraic} in nature, while \textit{continuity} results require additional (i.e. topological) structure. Thus we first consider the easier algebraic results before considering continuity. This also allows us to introduce the concept of identifiability in the most general setting possible, before considering what it leaves out. The reader unfamiliar or uncomfortable with our general formalism can, for the most part, simply imagine the mappings involved to be matrices or linear operators, though the results here are not restricted to this case\footnote{Such readers may like to refer to Section \ref{sec:reg} where we give a concrete translation of our algebraic results to the finite dimensional linear regression setting.}. The only presentation of statistical concepts in terms of the concepts of category theory that we are aware of is that given by \cite{McCullagh2002-hm} in an article on the question `what is a statistical model?'. The present article adopts a slightly different usage of the term `model' than \cite{McCullagh2002-hm}, which we hence make explicit below, but apart from some minor terminological differences, it appears that the choice of how to translate basic statistical concepts into categorial concepts is essentially the same in our approach as in theirs.

In what follows we first present our formalism in a general language that, nevertheless, largely follows statistical notation. Then we offer a direct translation of causal identifiability into this setting to show that this language is able to capture the key causal notions of interest. One benefit of our formalism is that it also applies equally well to subjects like inverse problems, where the `causal model' is usually something like a partial differential equation rather than a DAG or structural equation model\footnote{See e.g. \citet{mooij2013ordinary,blom2018generalized,ackley2017compartmental} for interesting work on bridging the gap between DAG-style causal models and (ordinary) differential equation-based causal models.}. 

\subsection{Setting}
We consider an arbitrarily-indexed collection of probability distributions of the form

\begin{equation}
\mathbb{P}_{\theta}(X,Y,...),
\label{eq-family}
\end{equation}
for observable random variables $X, Y,...$ and where each probability distribution instance is labelled by a value of some otherwise arbitrary (and possibly multi-/infinite-dimensional) index $\theta \in \Theta$. We make no assumption about the existence or not of densities for probability measures. As noted by \cite{Evans2002-as} this formalism can accommodate `nonparametric' problems by allowing the indexing space to be arbitrary and possibly infinite-dimensional. We similarly believe that it is important to maintain the presence of an abstract indexing space, even when dealing with infinite-dimensional problems, for reasons that will hopefully become clear.

The family in \eqref{eq-family} above is best thought of as a function, which we call, using the language of inverse problems \citep{Evans2002-as}, the \textit{forward mapping}:

\begin{definition}[Forward mapping]
The function $P$, where
\begin{equation}
P:\Theta \rightarrow \mathcal{P}, \ \theta \mapsto \mathbb{P}_{\theta},
\end{equation}
from the set $\Theta$ of possible values of our index $\theta$ into the set $\mathcal{P}$ containing possible probability measures $\mathbb{P}$ over observable random variables, is called the forward mapping. 
\end{definition}
We will primarily be dealing with the `infinite data' limit, but finite sample data can be imagined to be represented (asymptotically at least) by the associated empirical measure (essentially implying an I.I.D. assumption). Unless otherwise specified, $\mathcal{P}$ is the set of all possible probability distributions over our observables, or a convenient superset of this such as the (linear) space of all finite signed measures.

We emphasise that the distinction between the mapping $P$, the space of distributions $\mathcal{P}$ and specific distributions $\mathbb{P}$ is very important in what follows. \citet{Evans2002-as,Le_Cam2012-cc} often refer to what we call the forward mapping as a `statistical experiment' indexed by `theories' $\theta$. We take a similar approach but will generally prefer the term `forward mapping' as defined above, and use the more concrete term `model' in place of `theory'. Thus the index set $\Theta$ contains what we will call `models'; the forward mapping maps these models to distributions over observables. We will also think of the forward mapping as a `viewing' or `observation' operation when relating our approach to the graphical causal modelling formalism. Thus, while models define theories that lead to distributions over observables, models in our sense are distinct from both the forward mapping and the implied distributions. In causal inference $\Theta$ will contain causal models; in inverse problems $\Theta$ contains e.g. differential equation-based models. To summarise:


\begin{definition}[Models and model space]
Particular values of $\theta$ will be referred to as models, and $\Theta$ as model space. These represent theories that can lead to, but are in general distinct from, distributions over observable variables.
\end{definition}
Although this terminology fits naturally with the causal inference literature, this leads to potential ambiguity since the term `model' (or `statistical model') is often used in the statistical literature \citep[see e.g.][]{McCullagh2002-hm,Bickel2015-fz} to refer to what we call the forward mapping or, at times, the implied measure. As mentioned above, our terminology is consistent with the statistical framework for inverse problems presented by \citet{Evans2002-as}, and is also essentially the same as the terminology used by \citet{Le_Cam2012-cc} if the term `theory' is used in place of `model', and `statistical experiment' is used in place of `forward mapping'. 

One reason for adopting the terminology that we do is that it will allow us to give a direct translation between our framework and the causal inference literature. Importantly, however, unless otherwise specified $\Theta$ can be considered an arbitrary set in our mathematical arguments. This is a simple but important step, closely related to the idea of \citet{wald1950statistical} to allow arbitrary spaces of decisions and hence unify point estimation, interval estimation and testing in statistics under a more general decision theory framework. This influence carries through to our approach via e.g. \citet{Le_Cam2012-cc}.

Again to avoid terminological clashes and to develop our formalism we consider the concept of a `parameter' in more detail. As \citet{Godambe1984-nf} point out, the term `parameter' is often used in at least two distinct ways in statistics: one as an element of an index set (or a function of an element of an index set), and one as a property or feature of a distribution. It is important here to preserve the essence of this distinction. In the present work, again following \citet{Evans2002-as}, functions or functionals of $\theta$, denoted by $q(\theta)$, will be referred to as `parameters' and represent particular properties of interest of a model $\theta$, i.e.

\begin{definition}[Parameters associated with models]
Functions $q$ of models $\theta$ are called parameters and are defined by
\begin{equation}
    q: \Theta \rightarrow \mathcal{V}, \ \theta \mapsto q(\theta)
\end{equation} 
where $\mathcal{V}$ is some appropriate space of values. We will also consider the identity $q(\theta) = \theta$, with $\mathcal{V} = \Theta$, as defining a parameter representing the full model, and will hence also refer to $\theta$ values as parameters. 
\end{definition}
 
In the causal context, discussed further below, parameters like $q(\theta)$ will correspond to so-called `causal queries' of a `causal model' $\theta$. 
 
The development above considers model and parameter space concepts that are only related to observations via the forward mapping. We now define concepts related to mappings in the reverse direction, i.e \textit{estimation} concepts, that take observations, or distributions over observations, as inputs. Again there are subtle terminological issues. In particular, the term `estimator' usually involves finite samples and maps defined from observable random variables $X, Y,...$ to parameter space. However, here we are interested in intrinsic, `infinite data' (asymptotic) barriers to estimation. The robust statistics literature \citep{Hampel1971-jx,Hampel1974-ds,Hampel2011-cb} provides a way of unifying these points of view, by focusing on estimators given by (not necessarily scalar-valued) \textit{functionals} defined on the set of all possible probability distributions (or space of all finite signed measures), and where an estimate is typically obtained by evaluating the estimator at the empirical distribution.
 
For finite samples, many estimators may depend on sample size $n$ in addition to the empirical distribution. However, typically these can be represented asymptotically as functionals of the empirical distribution only \citep{Hampel1974-ds,Hampel2011-cb}. Such estimators can then be viewed, at least asymptotically, as functions defined on a full space of probability distributions. This space can include, for example, a target `true' distribution and relevant neighbourhoods of this. Hence we only consider what we will call \textit{functional estimators}:



\begin{definition}[Functional estimators]
A functional estimator is a function $T$ satisfying
\begin{equation}
T :\mathcal{P} \rightarrow \Theta, \ \mathbb{P} \mapsto \theta.
\end{equation}
\end{definition}
As mentioned, this will include the case where we have, in the usual (somewhat dubious!) statistical terminology, the `true', `full' or `target' population measure and hence -- in the identifiable and exactly-specified case, as we will see -- the `true' population parameter. We will also frequently drop the qualifier `functional' and simply refer to these as `estimators'.

\subsection{Translation of causal concepts}
Here we translate causal concepts into our language (and vice-versa). We assume the reader is familiar with the usual language used in the structural/graphical causal literature \citep[as used in e.g.][]{Pearl2009-qh,Pearl2014-vp}, though the mathematical results given in this article apply to much more general classes of systems and hence are largely independent of many of these details. In brief, we recall the concept of a structural causal model:

\begin{definition}[Structural causal models]
    A (fully-specified) structural causal model $M$ is defined by a set of endogenous variables $V$ and a set of exogenous variables $U$, with a structural equation for each endogenous variable and a joint probability distribution over the exogenous variables.
\end{definition}
For further details, for example on the concept of a structural equation, see \citet{Pearl2009-qh}. In the definition of \citet{Pearl2014-vp} given in the introduction, reference is made to `a set of assumptions $A$'. Following these authors, and the equivalent definitions given by \citet{Pearl2009-qh}, these assumptions are typically, but not always, specified graphically, at least in part, in terms of a causal graph such as a DAG. More generally, the (qualitative/background) causal assumptions can simply be taken to be synonymous with a specification of a \textit{collection} of fully-specified structural causal models, i.e. fully defined models of the `data generating mechanism' with no `free parameters'. $A$ then refers to the common qualitative properties that are satisfied by all models under consideration. These properties may or may not be explicitly characterised by some representation such as a graph (e.g. a DAG), though frequently such graphical representations are employed for this purpose. This is formalised as:


\begin{definition}[Causal classes]
Fully-specified causal models $M_1, M_2$ are said to belong to a class $\mathcal{M}$ \textit{iff} they satisfy the set of qualitative properties defined by the set of assumptions $A$. 
\end{definition}
Such assumptions may include, for example, the requirement that each model is compatible with a given directed acyclic graph (DAG) $G$. In general we may simply take the set of assumptions $A$ as synonymous with the specification of a set $\mathcal{M}$ of fully-specified causal models. Furthermore we define:

\begin{definition}[Causal queries]
A causal query $Q: \mathcal{M} \rightarrow \mathcal{V}, \ M \mapsto Q(M)$ is a function mapping fully-specified models in a common class $\mathcal{M}$ to some value space $\mathcal{V}$.
\end{definition}
Thus in the (more abstract) language of the present work, models $M_1, M_2$ correspond to models $\theta_1, \theta_2$, the causal class $\mathcal{M}$ corresponds to the model space $\Theta$ to which $\theta_1, \theta_2$ belong, and causal queries $Q(M)$ correspond to parameters $q(\theta)$. We also translate the function\footnote{Which is apparently not explicitly named in the causal setting, and hence is only implicitly distinguished from its value $P(M)$.} $P$ which maps any fully-specified model $M$ to its probability distribution $P(M)$ into our forward mapping $P$. We summarise this in:

\begin{definition}[Translation of basic causal concepts into our abstract framework]

\begin{equation}
\begin{aligned}
\mathcal{M}\ \ &\leftrightarrow\ \ \Theta\\
M_1, M_2 \in \mathcal{M}\ \ &\leftrightarrow\ \ \theta_1, \theta_2 \in \Theta\\
Q(M) \ \ &\leftrightarrow\ \ q(\theta)\\
P: \mathcal{M} \rightarrow \mathcal{P}, \ M \mapsto P(M)  \ \ &\leftrightarrow\ \ P:\Theta \rightarrow \mathcal{P}, \ \theta \mapsto P(\theta)
\end{aligned}
\end{equation}
\end{definition}
This translation implies that any results concerning what we call models and/or parameters in our more abstract general setting also translate directly to the causal setting. 

In the following example we give a more concrete illustration of the translation, which we will return to again after we have discussed identifiability of parameters $q(\theta)$ and their estimators in more detail.

\begin{example}[Structural and graphical causal models]
    Here we give a simple example of translating a few basic structural/graphical causal modelling ideas into our formalism. A schematic of this example is given in Figure \ref{fig:graphical-translation-I}. Here the model space $\Theta$ contains fully-specified structural causal models, i.e. a given (abstract) indexing vector $\theta$ corresponds to a specification of all information required to determine a structural model. For example $\theta_1$ may represent a structural model with $Z := f_Z(X)$, $Y := f_Y(X,Z)$ and a probability distribution over the exogenous variables $X$ given. Often additional disturbance or `error' terms are given as implicit exogenous variables, for example we may write e.g. $Y := f_Y(X,Z,U_Y)$ with $U_Y$ an additional error term. Hence, given a $\theta$, we have structural equations for all endogenous variables of $\theta$ and distributions over all exogenous (external) variables of $\theta$. Graphically, exogenous variables are simply those with no arrows pointing into them and endogenous variables are those with at least one incoming arrow, though sometimes additional (exogenous) error terms are not shown explicitly.
    
    In this example we allow models with two qualitatively different directions of causality for the relationship between $X$ and $Z$ (as well as the case of no causal relationship). For example we may wish to try to use data to decide which causal direction is correct -- unfortunately this is impossible at this level of detail, as it can be shown that the two models $\theta_1$ and $\theta_2$ have the same probabilistic implications (we discuss how to determine these implications below). This also means that we have chosen to allow $X$ and $Z$ to be thought of as either endogenous or exogenous, depending on which graph is considered. We thus suppose that we have available both structural equations for $X$ and $Z$ and exogenous distributions over $X$ and $Z$, and each model makes use of either a structural equation or a distribution for each, depending on which graph is considered. We assume a set of consistency conditions between any two fully-specified models. For example, here we assume that the two structural equations $Z := f_Z(X)$ and $X := f_X(Z)$ have the same solution sets, and that the structural equation $Y:= f_Y(X,Z)$ is the same for each model shown. The probability distributions for any exogenous variables shared between models are assumed to be the same in both models (and if additional error terms are included we take all exogenous variables to be independent). In other problems we may wish to distinguish between different structural equations that have the same set of dependent and independent variables, or allow more complex exogenous distributions. Furthermore, in the general case $\theta$ may be infinite-dimensional and capture quite arbitrary nonparametric structural equations. The qualitative assumptions determined by specification of the set $\Theta$ can also be quite arbitrary in general.
    
    We interpret the forward mapping $P$ as an instruction to `view' $Y|X$, meaning we simply `record' the value of this conditional probability for the given graph. Given a fully-specified structural model, the mapping `view$_{Y|X}$' can be operationalised by conditioning on $X$ while marginalising over $Z$ and any remaining exogenous variables (the probabilistic interpretation of graphical models in terms of graph factorisations can also be used). In the translation given here, we have chosen to represent probabilistic conditioning and marginalisation as part of the forward mapping/viewing operation, rather than consider graphs in which variables are explicitly conditioned on \citep[as in the `submodels' in e.g.][p. 204]{Pearl2009-qh}, as we believe this provides an even clearer separation between probabilistic `viewing' and causal `doing' operations: all probabilistic operations are considered `viewing' operations. This is one possible translation, however, and minor variants are possible. Importantly, in reality we only have access to finite samples from (and/or the corresponding empirical distribution for) these models. For now we stick to the idealised setting, but return to this crucial issue when we discuss stability/continuity considerations and ill-posed problems. 
    
    The query $q$ here is to `do' $X$, which is interpreted in essentially the same way as in the graphical causal literature: remove all arrows into $X$. Since we interpret conditioning as part of the `viewing' operation, however, here the `do' step is simply to make $X$ an exogenous variable. We relate this to the underlying structural equations by replacing the structural equation for $X$ by the exogenous distribution over $X$ (again, assumed to be available as part of the background assumptions). In this context the space $\mathcal{V}$ is a subset of the original space $\Theta$, i.e. `do$_X$' is a projection operation, mapping the space $\Theta$ into itself. For example, we require, for all $\theta \in \Theta$, do$_{X}(\theta) = \theta'$ for some $\theta'$ $\in \Theta$. This is another motivation for making conditioning part of the forward mapping, as otherwise we would have to consider a slightly modified value space $\mathcal{V}$, for example one in which the value of $X$ was also conditioned on. We note, however, that identifiability of queries $q(\theta)$ can be analysed even if the value space is different to (or not a subset of) $\Theta$. We return to identifiability of queries $q(\theta)$ later.
    
    Bringing the above together, the combination of our version of `do$_X$' with the instruction to `view $Y$ conditional on a particular $X$ value' completes the translation of the usual specification of `do$(X=x)$' in the graphical/strucutral literature. This means the basic translation of what is usually denoted $P_M(Y=y|do(X=x))$ in the causal literature is, here, the composite operation view$_{Y=y\vert X=x} \circ$ do$_{X}(\theta) = \mathbb{P}_{do_X(\theta)}(Y=y|X=x)$, where $M \leftrightarrow \theta$ is a fully specified model. Again, this is one possible translation, and others may wish to combine conditioning on variable values and removal of arrows into one operation.
\end{example}

\newpage 
\begin{figure}
\begin{center}
    \includegraphics[scale=0.8]{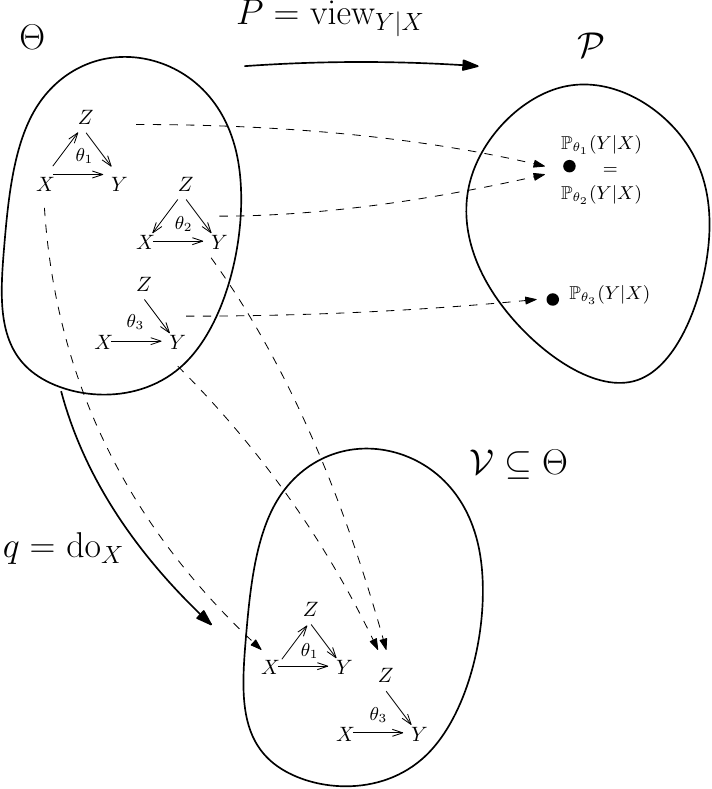} 
\end{center}
\caption{Illustration of translating structural/graphical causal models into our abstract formalism. The labels on the graphs here are abstract, and represent distinct, fully-specified structural causal models. We have allowed the direction of causality between $X$ and $Z$ to be unknown.}
\label{fig:graphical-translation-I}
\end{figure}
\clearpage

\newpage
\section{Identifiability and the existence of functional estimators}
Here we demonstrate the first step in what would seem to be the direct connection between identifiability and estimability: we show that identifiability implies the existence of Fisher-consistent\footnote{The idea of Fisher consistency is supposed to capture the idea that if these estimators could be evaluated `at the true population', then we would obtain our target quantity, and hence is closely related to the intuition of identifiability. This section demonstrates this tight connection. While we are aware of previous work investigating the links between identifiability and \textit{asymptotic consistency} \citep[where the former is necessary but not sufficient for the latter, see e.g.][]{Gabrielsen1978-wb,Martin2002-it}, we are unaware of explicit analyses of the link between identifiability and \textit{Fisher consistency}, as considered here. Based on the equivalence that we demonstrate in what follows, Fisher consistency appears to be the most natural estimation-oriented counterpart to identifiability, though asymptotic consistency is closely related to the topic of the rest of this article, i.e. \textit{stable} estimation.} functional estimators that can be written as mappings from probability distributions back to the indexing (model or parameter) space. In short, we first show that identifiability of the forward mapping implies the existence of the inverse mapping. Only after this essentially algebraic result do we show what breaks the apparent equivalence between identifiability and estimability: discontinuity of the inverse (estimator) in general.

\subsection{Definitions: Identifiability and Fisher consistent estimators}
We first consider the identifiability of full models (or `full parameters') here; the definition of identifiability of parameters $q(\theta)$ is given in a following subsection, where we show how our present arguments extend naturally to parameters. Our definition of identifiability for full models is given as follows:

\begin{definition}[Identifiability of domains, models, and mappings]
\label{def:identifiability}
The domain $\Theta$ of a forward mapping $P:\Theta \rightarrow \mathcal{P}$ is called identifiable iff $P$ is injective, i.e. a 1-1 mapping. We will also use the term identifiable to refer to the models in a common identifiable domain and to the injective mapping $P$ itself.
\end{definition}

An injective mapping corresponds to the concept of a monomorphism in category theory \citep[see e.g.][]{Lawvere2003-db,Lawvere2009-qo,geroch2015mathematical}, and can be given an essentially algebraic characterisation, which we give below\footnote{We adopt the perspective of elementary category theory here for precisely this reason: it provides a general algebra of abstract mappings -- e.g. not necessarily linear, or even functions between sets -- which means that many results that are often presented in special cases can be shown to hold in much more general settings.}. Hence, while injectivity of a mapping $P$ is usually expressed as

\begin{equation}
P(\theta_1) = P(\theta_2) \implies \theta_1 = \theta_2,
\end{equation}
a more general (or more algebraic/categorial) characterisation is 

\begin{definition}[Injectivity]
\label{def:injectivity}
$P$ is an injective mapping iff it is left-cancellable, i.e.

\begin{equation}
P \circ G = P \circ H \implies G = H
\end{equation}
for all maps $G$ and $H$ with common domain, and with ranges in $P$'s domain. 
\end{definition}

In category theory, elements $\theta \in \Theta$ can be thought of as special types of maps $\mathbf{1} \overset{\theta}{\rightarrow} \Theta$, where $\mathbf{1}$ is the singleton set, hence $P(\theta)$ as $P \circ \theta $, which informally indicates the equivalence of our definition to the usual definition \citep[a formal proof can be found in e.g.][]{Lawvere2003-db}. Other ideas from category theory that lead to useful cancellation properties include the following. Firstly, an algebraic/categorial characterisation of onto/surjective maps, or epimorphisms in the category of sets and mappings, is that they are right-cancellable. Secondly, an important property of left-inverses -- also called retractions -- and of right-inverses -- also called sections -- is that they are epimorphisms and monomorphims, respectively, and hence are right-cancellable and left-cancellable, respectively. The converse doesn't hold in general categories, but does hold given the axiom of choice (one form of which can be stated as: `all epimorphisms have sections'). Again, in the usual category of sets and mappings the terms `epimorphism' and `monomorphism' can be replaced by the more familiar terms `surjective' and `injective', respectively, and the axiom of choice is usually assumed to hold.

Next we give a categorial characterisation of Fisher consistency. First we recall that an estimator is called Fisher consistent if \citep{Huber2011robust,Hampel2011-cb}:

\begin{equation}
T(\mathbb{P}_{\theta}) = \theta
\end{equation}
for all $\theta$. In terms of the forward mapping this is

\begin{equation}
T\circ P(\theta) = \theta
\end{equation}
for all $\theta$. This can be summarised as saying

\begin{definition}[Fisher consistency]
An estimator $T$ is Fisher consistent for a forward mapping $P$ iff
\begin{equation}
T\circ P =1_{\Theta}
\end{equation}
where $1_{\Theta}$ is the identity map on $\Theta$. 
\end{definition}

This means that $T$ is a left-inverse (retraction) for $P$ and $P$ is a right-inverse (section) for $T$. Although $T$ is defined on all of $\mathcal{P}$, or at least a dense subset of it, Fisher consistency only really makes sense for distributions in the range of $P$, i.e. we only need to check that we can correctly \textit{recover} a parameter when we know the data was generated from a model with that parameter value. 

\subsection{The existence of a Fisher consistent estimator is equivalent to identifiability (given the axiom of choice)}

Here we give an elementary proof of the following result: 

\begin{theorem} The existence of a Fisher consistent estimator is equivalent to identifiability of the forward mapping, assuming the axiom of choice.
\end{theorem}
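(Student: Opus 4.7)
The plan is to prove the biconditional by establishing each direction separately, and in each case to work with the left-cancellation characterisation of injectivity (Definition~\ref{def:injectivity}) so that all manipulations are compositional rather than pointwise. This matches the algebraic/categorial style adopted by the paper and keeps the argument free of measure-theoretic distractions.

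For the easy direction (Fisher consistency $\Rightarrow$ identifiability), I would suppose that some $T: \mathcal{P} \to \Theta$ satisfies $T \circ P = 1_\Theta$. To verify left-cancellability of $P$, take arbitrary maps $G, H$ with common domain and codomain $\Theta$ such that $P \circ G = P \circ H$. Composing on the left with $T$ and using associativity gives $(T \circ P) \circ G = (T \circ P) \circ H$, which by Fisher consistency reduces to $1_\Theta \circ G = 1_\Theta \circ H$, hence $G = H$. Thus $P$ is a monomorphism, i.e., identifiable. This direction does not invoke the axiom of choice at all: it is the purely algebraic fact that any map with a retraction is itself a section and in particular a monomorphism.

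For the harder direction (identifiability $\Rightarrow$ existence of Fisher consistent estimator), I would construct $T$ in two steps. First restrict $P$ to its image to obtain $\widetilde{P}: \Theta \to P(\Theta)$, which is surjective by construction and injective by the identifiability hypothesis; as a bijection onto $P(\Theta)$ it possesses a two-sided inverse $\widetilde{P}^{-1}: P(\Theta) \to \Theta$. Second, extend $\widetilde{P}^{-1}$ to all of $\mathcal{P}$ by choosing any fixed $\theta_0 \in \Theta$ (the case $\Theta = \emptyset$ being trivial) and setting $T(\mathbb{P}) = \theta_0$ for every $\mathbb{P} \in \mathcal{P} \setminus P(\Theta)$. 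Then $T \circ P = \widetilde{P}^{-1} \circ \widetilde{P} = 1_\Theta$, so $T$ is Fisher consistent. The role of the axiom of choice here is the one flagged in the paragraph before the theorem: in a fully categorial presentation one views $\widetilde{P}$ as an epimorphism and invokes AC in the form ``every epimorphism has a section'' to split it abstractly, rather than selecting a representative by hand.

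The main obstacle to verify, rather than a mathematical difficulty, is that the arbitrary extension of $T$ off the image $P(\Theta)$ does not compromise Fisher consistency. This is immediate from the definition: $T \circ P = 1_\Theta$ only constrains $T$ on $P(\Theta)$, so any completion of $T$ on $\mathcal{P} \setminus P(\Theta)$ is admissible. This observation is the formal counterpart of the remark in the text that Fisher consistency ``only really makes sense for distributions in the range of $P$'', and it is precisely what makes the reverse implication trivial once identifiability is assumed. The deeper point of the paper, namely that no continuity of $T$ at the target distribution is guaranteed by this construction, is correctly deferred to the sections that follow.
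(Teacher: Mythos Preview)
Your argument is correct, but it diverges from the paper's in both directions, and in a way worth noting.

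For Fisher consistency $\Rightarrow$ identifiability, the paper argues pointwise: from $P(\theta_1)=P(\theta_2)$ it composes with $T$ and uses $T\circ P=1_\Theta$ to conclude $\theta_1=\theta_2$. You instead verify left-cancellability directly by composing an equation $P\circ G=P\circ H$ with $T$. The two are equivalent, and your version is arguably closer to the categorial spirit the paper advertises.

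For identifiability $\Rightarrow$ Fisher consistency, the routes differ more substantially. The paper invokes the axiom of choice in the \emph{von Neumann regularity} form: every map $P$ admits an inner inverse $G$ with $P\circ G\circ P=P$; since $P$ is left-cancellable, this yields $G\circ P=1_\Theta$ immediately. Your construction instead restricts $P$ to a bijection onto its image, inverts it, and extends by a constant. In the category of sets this is perfectly valid and in fact does \emph{not} require AC at all (inverting a bijection and extending by a fixed $\theta_0$ are both choice-free), so your parenthetical attribution of AC to ``splitting the epimorphism $\widetilde{P}$'' is slightly off: $\widetilde{P}$ is already a bijection, not merely an epi. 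The paper's use of AC is genuine because it is phrased for an abstract category where one cannot form the image object $P(\Theta)$ or pick $\theta_0$ by hand; the inner-inverse axiom is what replaces that machinery. What your approach buys is concreteness and a sharper dependency analysis (showing AC is dispensable in Set); what the paper's approach buys is that it transfers verbatim to any category satisfying the regularity axiom, which is the generality the authors want for nonlinear and non-set-based models.
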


\begin{proof} Firstly, suppose that there exists a Fisher-consistent estimator, i.e.

\begin{equation}
T\circ P =1_{\Theta}.
\end{equation}
Now suppose we have $P(\theta_1) = P(\theta_2)$. In this case

\begin{equation}
T\circ P(\theta_1) = \theta_1
\end{equation}
and

\begin{equation}
T\circ P(\theta_2) = \theta_2
\end{equation}
by Fisher consistency. But $T\circ P(\theta_1) = T(P(\theta_1)) = T(P(\theta_2)) = T\circ P(\theta_2)$, since $P(\theta_1) = P(\theta_2)$ by assumption and $T$ is a function, and thus $\theta_1 = \theta_2$, as required to show identifiability.

For the other direction, we suppose that the forward mapping is identifiable, i.e. is a monomorphism (1-1). We will demonstrate the existence of a Fisher-consistent estimator. In order to do so, we will use a condition that turns out to be equivalent to the axiom of choice. This does not hold in arbitrary categories, or even in arbitrary toposes, but is usually considered to hold in the usual category of sets and mappings. Here it amounts to the assumption of the existence of a generalised inverse for all maps in the category, known elsewhere \citep[see e.g.][]{NASHED1976} as the assumption that all maps are (von Neumann) regular or quasi-invertible\footnote{While a seemingly technical matter, the need to assume this to establish the existence of an estimator given identifiability points to one source of the difficulty in establishing a stable connection between estimability and identifiability: it allows for an existence result to hold while providing no guarantee on the further properties (e.g. stability) of the object or on any procedure of construction of the object.}. This form of the axiom is likely most familiar from the characterisation of generalised inverses in linear algebra, but the key idea also carries over to nonlinear maps.

The most convenient form of the required axiom can be expressed as \citep{Lawvere2003-db}: 

\begin{definition}[Axiom of choice.]
For all maps $P$ there exists a $G$ such that

\begin{equation}
P\circ G\circ P = P,
\label{eq-inner}
\end{equation}
where $G$ is called an inner inverse to $P$. 
\end{definition}

As discussed by \cite{NASHED1976}, inner inverses are not generally outer inverses as well, where a map $G$ is called an outer inverse for $P$ if it satisfies:

\begin{equation}
G\circ P\circ G = G.
\label{eq-outer}
\end{equation}
Generalised inverses in the usual sense are required to be both inner and outer inverses. Given an inner inverse, however, a map which is both an inner inverse and an outer inverse can \textit{always} be constructed from an inner inverse $G$ by taking $G^\dagger = G\circ P\circ G$. Hence for simplicity we will assume that this construction has been carried out and that we are working with a map that is also an outer inverse, i.e. is a proper generalised inverse.

Now, if, in \eqref{eq-inner}, we simply take $T=G$ and take $P$ to be our usual foward mapping, we guarantee the existence of a $T$ satisfying

\begin{equation}
P\circ T\circ P = P.
\end{equation}
From the identifiability assumption we know that $P$ is 1-1 and hence left-cancellable, which implies

\begin{equation}
T\circ P = 1_\Theta
\end{equation}
as required to show Fisher consistency.
\end{proof}

Thus we conclude: identifiability is equivalent to the existence of a (Fisher consistent) estimator (in categories with the axiom of choice). 

\subsection{Note on idempotent maps}
While $T\circ P$ is the identity in the above setting, $P \circ T$ is not the identity in general, but \textit{is} an idempotent map\footnote{i.e. a general projection mapping satisfying $A^2 = A$.} since

\begin{equation}
(P\circ T) \circ (P \circ T) = (P\circ T \circ P) \circ T = P \circ T.
\end{equation}
In linear regression problems in statistics this is often called the \textit{hat matrix} \citep{Huber2011robust,Hoaglin1978-vt}, while in inverse problems it is sometimes called the \textit{data resolution operator} \citep{aster2013parameter}: it maps the actual data distribution to a `smoothed' or estimated data distribution. This map will be the identity only if $P$ is surjective (onto); in most statistical estimation problems it is not surjective and a \textit{reduction of data} \citep[see e.g.][]{fisher1990statistical} occurs. 

To help the reader grasp the relatively abstract discussion so far, we give a brief illustration of how these ideas work in the classical statistical context of linear regression next, before continuing the translation of these abstract ideas to the \textit{causal} setting.

\subsection{Illustration of basic concepts}\label{sec:reg}
Here we give an elementary illustration of the above ideas in terms of the linear regression problem.

\begin{example}[Linear regression]
In the basic linear regression context we wish to `solve' the linear problem

\begin{equation}
    A\theta = y
\end{equation}
where $A$ is an $m \times n$ real matrix mapping parameters to data, $\theta \in \mathbb{R}^n$ is the parameter (model) vector, and $y \in \mathbb{R}^m$ is the data vector and is assumed to be given\footnote{Here we are following the standard linear regression setting where parameters are mapped to data vectors, rather than mapped to data distributions. Either setting is covered by the general formalism. See also \citet{Evans2002-as} for the relation between these settings.}. The goal is to estimate (i.e. solve for) $\theta$ given $A$ and $y$. Here injectivity of $A$, which is equivalent to identifiability of $\theta$ as shown above, means that the columns of $A$ are linearly independent. We do not assume that $A$ is surjective -- in fact in regression problems it will not be. Thus to `invert' $A$, for a general $y$ not necessarily in the range of $A$, we need to find a generalised inverse, here denoted by $A^\dagger$, such that it satisfies our inner inverse condition, i.e.

\begin{equation}
    A A^\dagger A = A.
\end{equation}
From basic linear algebra we know that linear independence of $A$'s columns means that, while $A$ is not invertible in general, $A^TA$ is positive definite and invertible, where $A^T$ is the transpose of $A$. This means that the following is well-defined as a possible choice for $A^\dagger$:

\begin{equation}
    A^\dagger = (A^TA)^{-1}A^T.
\end{equation}
Substituting this into the left-hand side of our inner inverse condition gives:

\begin{equation}
    A (A^TA)^{-1}A^T A = A I = A
\end{equation}
which verifies that $A^\dagger$ is in fact an inner inverse. Furthermore, because $A$ is injective/identifiable, then by the general argument above $A^\dagger$ should also a left inverse of $A$. This can be directly verified:

\begin{equation}
    A^\dagger A = (A^TA)^{-1}A^T A = I.
\end{equation}
$A^\dagger$ is not a right inverse in general, however, i.e. $AA^\dagger \neq I$ in general. Instead $AA^\dagger$ is a projection (idempotent) matrix since

\begin{equation}
    (AA^\dagger)(AA^\dagger) = (A(A^TA)^{-1}A^T)(A(A^TA)^{-1}A^T) = A (A^TA)^{-1}A^T = AA^\dagger
\end{equation}
Putting this together, we can take $A^\dagger$ as our estimator (denoted by $T$ in the general arguments of the previous section), with estimated \textit{model/parameter}

\begin{equation}
    \hat{\theta} = (A^TA)^{-1}A^T y = A^\dagger y
\end{equation}
and estimated/smoothed \textit{data}

\begin{equation}
    \hat{y} = A(A^TA)^{-1}A^T y = A A^\dagger y = \hat{A}y
\end{equation}
where $\hat{A} = AA^\dagger$ defines the so-called hat matrix. Our estimator is Fisher-consistent because $A^\dagger$ is a left inverse of $A$ and hence, if our data was generated by $\theta$ i.e. $y=A\theta$, then

\begin{equation}
     A^\dagger y = A^\dagger A \theta = (A^TA)^{-1}A^T A = I \theta = \theta
\end{equation}
for any $\theta$, i.e. our estimator recovers the true value. In fact, the above holds true when $y = A\theta + e$, for $e$ orthogonal to the columns of $A$, i.e. $A^Te = 0$, meaning that $A\theta$ represents the conditional expectation of $y$.
\end{example}

All of the above is standard statistics, and essentially just linear algebra. Nothing is said about e.g. the stability of the above -- as is well known, such regression estimates can be extremely unstable in the presence of outliers \citep{Hampel2011-cb,Huber2011robust}. The point of the more abstract formalism adopted in the rest of this article is that the same essentially \textit{algebraic} concepts carry over to the general \textit{nonlinear} setting as well. Furthermore, as we will show next, because these results only rely on general algebraic concepts they can also be carried over to the \textit{causal} inference setting using an appropriate interpretation of the relevant domains, codomains and mappings. Again, however, these results neglect stability and/or topological considerations. The final parts of the present article will thus be concerned with these topics.

\subsection{Parameters and causal queries}
Here we extend our previous proof to the case of parameters $q(\theta)$, and then show that our concept of parameter identifiability translates, in the special case of causal models, to causal identifiability. We then consider Fisher consistency for parameters. We will again treat elements as special types of maps, e.g. for a parameter value $v \in \mathcal{V}$ given by $v = q(\theta)$, we can also treat $v$ as a mapping $\mathbf{1} \overset{v}{\rightarrow} \mathcal{V}$, where $\mathbf{1}$ is the singleton set. Hence an expression like $s(v)$ can equivalently be read as $s \circ v$, and vice-versa.

\subsubsection{Identifiability for parameters}
The usual definition of identifiability of a parameter function $q(\theta)$ \citep[see e.g.][]{Evans2002-as} is:

\begin{definition}[Identifiability of parameters -- standard]
\label{def:ident-q-stand}
A parameter $q: \Theta \rightarrow \mathcal{V}$ is identifiable with respect to $P$ iff for all $\theta_1 \neq \theta_2$ 
\begin{equation}
    P(\theta_1) = P(\theta_2) \implies q(\theta_1) = q(\theta_2)
\end{equation}
or, equivalently, iff for all $\theta_1 \neq \theta_2$ 

\begin{equation}
    q(\theta_1) \neq q(\theta_2) \implies P(\theta_1) \neq P(\theta_2).
\end{equation}
\end{definition}

This definition is the same as the definition of causal identifiability given in the introduction, in the special case of the causal interpretation of our more abstract framework. The definition is also an injectivity requirement but now corresponds to requiring the \textit{relation} from $q(\theta)$ values to $P(\theta)$ values to be injective. That is, this relation is no longer \textit{univalent} (i.e. no longer a function), which means that there is not necessarily a unique or well-defined \textit{generative model} from $q$ values to observable data. Similarly, \citet{McCullagh2002-hm} provides an essentially set-theoretical, but equivalent, definition of (sub-) parameter identifiability. Hence a general abstract characterisation of sub-parameter identifiability and its implications could be pursued in relational, or bicategory, terms. We adopt a slightly different formalisation, however, continuing to use ideas from the category of sets and mappings to relate identifiability of sub-parameters to the original generative model $P$. We believe this offers a useful conceptual viewpoint, and one that motivates constructive computer implementations of estimation procedures using functions. The definition amounts to considering all possible `representatives' of the inverse images of points $v$ in the range space $\mathcal{R}(q)$ of $q$ in \citet{McCullagh2002-hm}'s definition. We show that our new definition is equivalent to the standard definition. Our definition is as follows:

\begin{definition}[Identifiability of parameters -- choice of representatives]
\label{def:ident-q-gen}
A parameter $q: \Theta \rightarrow \mathcal{V}$ is identifiable with respect to $P$ iff 

\begin{equation}
\begin{aligned}
\forall s&:\mathcal{R}(q)\rightarrow \Theta \text{ such that } q\circ s = 1_{\mathcal{V}}, \forall v_1, v_2 \in \mathcal{V}\\
&v_1 \neq v_2 \implies P \circ s \circ v_1 \neq  P \circ s \circ v_2
\end{aligned}
\end{equation}
or, equivalently, iff 

\begin{equation}
\begin{aligned}
\forall s&:\mathcal{R}(q)\rightarrow \Theta \text{ such that } q\circ s = 1_{\mathcal{V}}, \forall v_1, v_2 \in \mathcal{V}\\
&P \circ s \circ v_1 =  P \circ s \circ v_2  \implies v_1 = v_2.
\end{aligned}
\end{equation}
\end{definition}

As mentioned above, one interpretation of the above is that we are working with arbitrary choices of `representative' parameters in $q$-induced equivalence classes, where, for $v = q(\theta)$, representatives are defined by $\theta^* = s\circ v = s\circ q (\theta)$ for any $s$. Note that $(s\circ q) \circ (s \circ q) = s\circ (q \circ s) \circ q = s \circ q$ since $q\circ s = 1_{\mathcal{V}}$ and hence $s\circ q$ is idempotent, i.e. is a general projection that maps equivalence class members to representatives. The images of these representatives under $P$ are $P(\theta^*) = P(s\circ v) = P(s\circ q (\theta))$. It follows that the above can be interpreted as saying that the composite function $P\circ s$ is a 1-1 function for all sections $s$ of $q$, i.e. for all choices of representatives. To verify that nothing is lost, we will prove the equivalence to the standard definition.

\begin{theorem}
The parameter $q$ is identifiable, with respect to the forward mapping $P$, according to the standard definition iff it is identifiable, with respect to $P$, according to our alternative definition.
\end{theorem}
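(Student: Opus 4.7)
The plan is to prove the two implications of this equivalence separately, using the sections $s$ of $q$ (i.e. maps satisfying $q\circ s = 1_{\mathcal{V}}$) as the bridge between the two formulations.

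For the ``standard $\Rightarrow$ choice of representatives'' direction, I would fix any section $s$ and any two points $\theta_1,\theta_2 \in \Theta$, and set $\theta_i^* = s\circ q(\theta_i)$. Assuming $P(\theta_1^*) = P(\theta_2^*)$, the standard definition applied to the pair $(\theta_1^*,\theta_2^*)$ gives $q(\theta_1^*) = q(\theta_2^*)$. The key computation is then simply $q(\theta_i^*) = q\circ s \circ q(\theta_i) = 1_{\mathcal{V}}\circ q(\theta_i) = q(\theta_i)$, using that $s$ is a section of $q$. Hence $q(\theta_1) = q(\theta_2)$, and applying $s$ to both sides yields $s\circ q(\theta_1) = s\circ q(\theta_2)$, which is the conclusion required by Definition \ref{def:ident-q-gen}.

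For the ``choice of representatives $\Rightarrow$ standard'' direction, I would argue by contraposition. Suppose $P(\theta_1) = P(\theta_2)$ but $q(\theta_1) \neq q(\theta_2)$; I want a contradiction. Since $q(\theta_1) \neq q(\theta_2)$, the fibres $q^{-1}\{q(\theta_1)\}$ and $q^{-1}\{q(\theta_2)\}$ are disjoint, so by the axiom of choice (already invoked in the paper's framework) I can construct a section $s\colon \mathcal{R}(q)\to\Theta$ of $q$ that sends $q(\theta_1)\mapsto \theta_1$ and $q(\theta_2)\mapsto \theta_2$, and makes an arbitrary choice on all other values. Then $s\circ q(\theta_i) = \theta_i$, so $P\circ s\circ q(\theta_1) = P(\theta_1) = P(\theta_2) = P\circ s\circ q(\theta_2)$, and the alternative definition forces $s\circ q(\theta_1) = s\circ q(\theta_2)$, i.e.\ $\theta_1 = \theta_2$. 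This contradicts $q(\theta_1) \neq q(\theta_2)$.

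The only mildly delicate step is the explicit construction of the section $s$ with prescribed values on $\{q(\theta_1),q(\theta_2)\}$: one must note that this is well-defined precisely because these two points of $\mathcal{R}(q)$ are distinct (so the two prescriptions do not clash), and then extend arbitrarily using choice. Everything else reduces to the categorial identity $q\circ s = 1_{\mathcal{V}}$ and straightforward composition manipulations, entirely in the spirit of the paper's earlier algebraic arguments.
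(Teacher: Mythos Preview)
Your proposal is correct and takes essentially the same approach as the paper: both directions hinge on the identity $q\circ s = 1_{\mathcal{V}}$ and, for the harder direction, the explicit construction (via the axiom of choice) of a section $s$ with prescribed values $s(q(\theta_i)) = \theta_i$ on the two distinct points $q(\theta_1),q(\theta_2)$. The only cosmetic difference is that you prove the ``standard $\Rightarrow$ alternative'' direction directly, whereas the paper proves both directions by contrapositive; the underlying computations are identical.
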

\begin{proof}
We will prove each direction by contrapositive, i.e. we will show non-identifiability according to one definition implies non-identifiability according to the other. 

Firstly, suppose that $q$ is not identifiable according to our alternative definition. Then there exists an $s:\mathcal{R}(q)\rightarrow \Theta, \text{ such that } q\circ s = 1_{\mathcal{V}}$, and $v_1, v_2 \in \mathcal{V}$, such that $P \circ s \circ v_1 =  P \circ s \circ v_2$ but $s \circ v_1 \neq s \circ v_2$. Define $\theta_1' = s \circ v_1$ and $\theta_2' = s \circ v_2$. Then $q(\theta_1') = q\circ s\circ v_1 = v_1$ (by the key property of $s$) and $q(\theta_2') = q\circ s\circ v_2 = v_2$. Hence $q(\theta_1') \neq q(\theta_2')$ but $P(\theta_1') = P \circ s \circ v_1 = P \circ s \circ v_2 = P(\theta_2')$, which demonstrates non-identifiabilty according to the usual definition. 

Next, suppose that $q$ is non-identifiable according to the usual definition. Then there exists $\theta_1$ and $\theta_2$ for which $P (\theta_1) =  P(\theta_2)$ but $q(\theta_1) \neq q(\theta_2)$. We simply need to show that there always exists an $s$ for which the previous construction works `in reverse'. Since $q(\theta_1) \neq q(\theta_2)$ we know that the range of $q$, i.e. the domain of $s$, contains at least two distinct points, $v_1 = q(\theta_1)$ and $v_2 = q(\theta_2)$, $v_1 \neq v_2$. We will construct an $s$, starting from these two points, and extend it to the rest of the range of $q$ (if any). Define $s$ on these two points by $s (v_1) = \theta_1$ and $s (v_2) = \theta_2$. Then clearly $q \circ s$ and $s \circ q$ are the identity mappings when restricting attention to $\{v_1,v_2\}$ and $\{\theta_1,\theta_2\}$, respectively, i.e. $s$ defines a bijection when restricted to these two sets. Next consider $q'$ mapping $\Theta \setminus q^{-1}\{v_1,v_2\}$ to $\mathcal{R}(q)\setminus\{v_1,v_2\}$. This is clearly surjective since $q$ is surjective on $\mathcal{R}(q)$. Thus, by the axiom of choice, there exists an injective mapping $s'$ defined on $\mathcal{R}(q)\setminus\{v_1,v_2\}$ such that $q'\circ s' = 1_{\mathcal{R}(q)\setminus\{v_1,v_2\}}$. Defining $s$ to be equal to this $s'$ over $\mathcal{R}(q)\setminus\{v_1,v_2\}$, and defined as above for $\{v_1,v_2\}$, thus ensures the existence of a mapping satisfying $q\circ s = 1_{\mathcal{V}}$. Furthermore, we have $P\circ s \circ v_1 = P(\theta_1) = P (\theta_2) = P\circ s \circ v_2$, while we know $v_1 \neq v_2$ by assumption. Thus $q$ is non-identifiable according to our definition.
\end{proof}

To make clearer the (equivalence of the) above definitions of identifiability we give a simple example of when a parameter, $q$, is not identifiable with respect to a given forward mapping $P$. We will, for simplicity, consider a forward mapping $P$ which is vector-valued rather than measure-valued\footnote{This example can be directly related to the probabilistic case when the model is based on an additive error assumption, or a location model. As shown in \cite{Evans2002-as}, a parameter in this setting is identifiable \textit{iff} it is identifiable using just the associated deterministic component of the forward mapping.}.

\begin{example}[A non-identifiable mapping and parameter]
\label{ex:q}
Here we consider a simple example of a non-identifiable mapping and parameter. For simplicity, here we take all of our domains and ranges to be (real-valued) $n$-spaces and the associated mappings $q:\mathbb{R}^2\rightarrow\mathbb{R}$ and $P:\mathbb{R}^2\rightarrow \mathbb{R}$ as linear transformations. Hence we have $\Theta=\mathbb{R}^2$, $\mathcal{V}=\mathbb{R}$ and $\mathcal{P}=\mathbb{R}$. Then, for any $\theta\in\Theta$ we can write $\theta=(x,y)^T$ where $x,y\in\mathbb{R}$, and define the parameter of interest by $v = q(\theta)=q\theta=x$, i.e., $q=(1, 0)$. Furthermore, we define $P$ by $P(\theta)=P\theta=x+y$, i.e., $P=(1,1)$. We now show that $q$ is not identifiable with respect to $P$ in terms of Definitions \ref{def:ident-q-stand} and \ref{def:ident-q-gen}. To do this, let $\theta_1=(x_*,y_*)^T$ and $\theta_2=(y_*,x_*)^T$, for some $x_*,y_*\in\mathbb{R}$  with $x_*\neq y_*$.

Considering Definition \ref{def:ident-q-stand}, note that $P(\theta_1)=x_*+y_*=y_*+x_*=P(\theta_2)$, while $q(\theta_1)=x_*\neq y_*=q(\theta_2)$. Hence $q$ is not identifiable with respect to $P$ according to the standard Definition \ref{def:ident-q-stand}. Next, considering Definition \ref{def:ident-q-gen}, let $s$ be defined by:


$$
s(v)=
\begin{cases}
(v,y_*)^T, & v=x_*\\
(v,x_*)^T, & v=y_*\\
(v,0)^T, & v\notin\{x_*,y_*\}.
\end{cases}
$$
This can be thought of as, for example, choosing $\theta_1 = (x_*,y_*)$ and $\theta_2 = (y_*,x_*)$ as the representatives for $v_1 = x_*$ and $v_2 = y_*$, respectively, while choosing the Moore-Penrose pseudo-inverse, $s=(1,0)^T=q^\dagger$, for $v\in\mathbb{R}\setminus\{x_*,y_*\}$. It is easily verified that for all $v\in\mathbb{R}$, $q\circ s(v)=v$, i.e. $q\circ s= 1_\mathbb{R}=1$. Furthermore, we have $ v_1\neq v_2$, since $x_* \neq y_*$, while $P\circ s\circ v_1=P(\theta_1)=x_*+y_*=y_*+x_*=P(\theta_2)=P\circ s\circ v_2$. Therefore $q$ is not identifiable with respect to $P$ according to Definition \ref{def:ident-q-gen}.
\end{example}
A schematic of the above example is given in Figure \ref{fig:non-id}.

\newpage 
\begin{figure}
\begin{center}
 \begin{tikzpicture}[scale=0.8]
     \draw (0,0) -- (4,0);
     \draw (6,0) -- (10,0);
     \draw (8,-2) -- (8,2);
     \draw (12,0) -- (16,0);
     \node (a) at (6, 2) {};
     \node (b) at (2, 1) {};
     \draw[->] (a)  to [out=150,in=60] (b);
     \draw (5,2.8) node[left] {$q$};
     \node (c) at (10, 2) {};
     \node (d) at (14, 1) {};
     \draw[->] (c)  to [out=30,in=120] (d);
     \draw (11,2.7) node[right] {$P$};
     \node (e) at (2.5, 1) {};
     \node (f) at (6, 1.5) {};
     \draw[->] (e)  to [out=60,in=150] (f);
     \draw (5,1.75) node[left] {$s$};
     
     \draw[ultra thick] (6.2,0) -- (9.8,0);
     \node[inner sep=0pt,minimum size=3pt,circle,fill=black,label=below:{$v_1$}] (v1) at (1,0) {};
     \node[inner sep=0pt,minimum size=3pt,circle,fill=black,label=below:{$v_2$}] (v2) at (3,0) {};
     \node[inner sep=0pt,minimum size=3pt,circle,fill=black,label=left:{$\theta_1$}] (theta1) at (7,1) {};
     \node[inner sep=0pt,minimum size=3pt,circle,fill=black,label=right:{$\theta_2$}] (theta2) at (9,-1) {};
     \node[inner sep=0pt,minimum size=3pt,circle,fill=black,label=below:{$P(\theta_1)=P(\theta_2)$}] (Ptheta) at (14,0) {};
     \node[circle,draw=black, fill=white, inner sep=0pt,minimum size=3pt] (b) at (7,0) {};
     \node[circle,draw=black, fill=white, inner sep=0pt,minimum size=3pt] (b) at (9,0) {};
   
     \draw (15,1) node[above right] {$\mathcal{P}=\mathbb{R}$};    
     \draw (1,1) node[above left] {$\mathcal{V}=\mathbb{R}$};  
     \draw (8,2.5) node[above] {$\Theta=\mathbb{R}^2$};  
     \draw (2,2.9) node[below] {$q\circ s=1_{\mathcal{V}}$};   
\end{tikzpicture}
\end{center}
\caption{Illustration of Example \ref{ex:q}, showing a non-identifiable parameter $q$, relative to a forward mapping $P$. The section $s$ `picks out' the representatives of the non-identifiable $q$ values, while for simplicity is equal to the minimum norm representatives in the other cases. The values of $s$ are indicated by the dark lines/points in the middle diagram.}
\label{fig:non-id}
\end{figure}

We can also illustrate an example of non-identifiability by re-considering our first example, updated in Figure \ref{fig:graphical-translation-II}.
\begin{example}
This continues Example 1 where we translated a few key causal ideas into our framework in the context of a simple example. We can capture the key intuition behind identifiability of parameters for structural and graphical causal models by calling the `choice of representatives' (section) mappings (denoted by $s$ above) `undo' operations. In particular, while `do$_X$' means all arrows into $X$ are erased, i.e. the structural equation for $X$ is replaced by an exogenous probability distribution for $X$, `undo$_X$' means potentially adding arrows into $X$, i.e. potentially replacing the exogenous distribution for $X$ by a structural equation for $X$. Figure \ref{fig:graphical-translation-II} provides a corresponding update of Figure \ref{fig:graphical-translation-I} with an example choice of undo operation added.

Here we assume that there are only three possible links between $X$ and $Z$: that given by $\theta_1$, $\theta_2$ or $\theta_3$. Hence for $\theta_1$ we cannot add any arrows, while for $\theta_3$ we have two options -- add the same structural equation as is present in $\theta_2$ (and hence become the $\theta_2$ model) or keep $X$ as exogenous (and hence stay the $\theta_3$ model). In general we may allow multiple possible structural equations for $X$ but here we assume only these cases for simplicitly. The choice of an undo operation compatible with a given do operation is of course not unique; however, any choice of $s$ must satisfy $q \circ s = 1_{\mathcal{V}}$, i.e. do$_X \circ$ undo$_X = 1_{\mathcal{V}}$ for all possible choices of undo$_X$. That is, either no arrows are added or any arrows added would be removed by a subsequent `do' operation. This is as appears in Definition \ref{def:ident-q-gen}, where we also have multiple choices of representatives, or undo operations, to consider. Definition \ref{def:ident-q-gen} then translates as `a query $q$ = do$_X$ is identifiable iff for all possible undo operations compatible with $q$, view$_{Y|X}\circ undo_X = P\circ s$ is a 1-1 function'. 

On the other hand, if there exists an undo mapping that allows two models to be mapped to new models with additional arrows into $X$ such that they have same observational implications, do$_X$ is not identifiable. This is this case in our example: by not knowing whether $X$ causes $Z$ or $Z$ causes $X$, we are unable to determine the query do$_X$. This is well-known in the causal literature \citep{Pearl2009-qh}.

Typically causal identifiability analysis is carried out algebraically. However, we can now formulate this (somewhat non-rigorously) as an optimisation problem as follows. First, we note that given a $\theta'$ imagined to have resulted from a do$_X(\theta)$ operation for some $\theta$, we can form a correspondence between `choices of representatives' and undo-do-value pairs via $\theta_1 \leftrightarrow (s_1,\theta')$, $\theta_2 \leftrightarrow (s_2,\theta')$ and so on. One of these will be the original $\theta$, but there may be multiple possible alternatives in general. We can then say that do$_X$ is non-identifiable if, for some values $\theta'_1 \neq \theta'_2$ of do$_X(\theta)$ we have that $\inf_{s_1,s_2} d(P(s_1,\theta'_1),P(s_2,\theta'_2)) =0$ for an appropriate choice of distance $d$ between probability distributions. Otherwise the query is identifiable. Here we have adopted the abuse of notation $P(s,\theta') := P(s(\theta'))$, based on the 1-1 correspondence mentioned above. This optimisation problem is closely related to checking for non-identifiability by checking for a `flat profile' in $\theta'$ when maximising over `nuisance parameters' $s$ in the sense of profile likelihood (but here in the `infinite data' limit), and is also closely related to minimum distance estimation more generally. This idea also applies when the value space $\mathcal{V}$ is not equal to/is not a subset of $\Theta$, and is a useful practical approach to identifiability analysis in complex models, including those not expressed as structural causal models and/or when the target query is not itself a fully-defined model. 

\end{example}

\begin{figure}[h]
\begin{center}
    \includegraphics[scale=0.8]{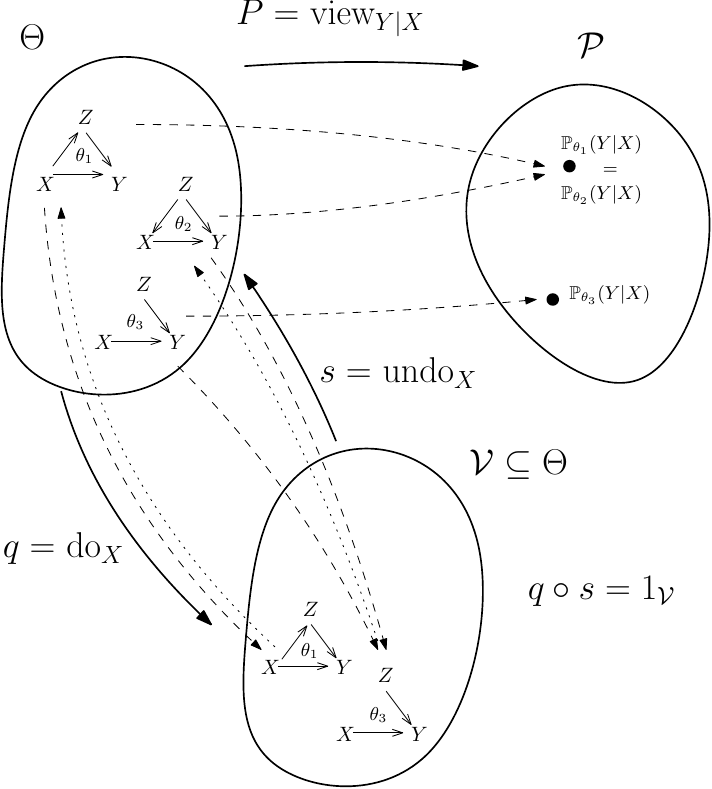} 
\end{center}
\caption{Illustration of translating structural/graphical causal models into our abstract formalism. This is an updated version of Figure \ref{fig:graphical-translation-I}. The labels on the graphs here are abstract, and represent distinct, fully-specified structural causal models. We have allowed the direction of causality between $X$ and $Z$ to be unknown. This leads to non-identifiability of do$_X$, and we have added one example `undo' mapping that detects this non-identifiability; in general there are multiple undo mappings (right inverses/sections) for a given do operation: given some $q$, any mapping $s$ satisfying $q\circ s = 1_{\mathcal{V}}$ is a valid choice, and our characterisation of identifiability requires all such possible choices to be considered.}
\label{fig:graphical-translation-II}
\end{figure}

Next we will extend the key theorem relating identifiability and the existence of Fisher-consistent estimators from the previous section. 

\subsubsection{Fisher consistency for parameters}
First we define estimators of parameters $q$, and Fisher consistency for these parameters:

\begin{definition}[Fisher consistency for parameter estimators]
A functional estimator for a parameter $q: \Theta \rightarrow \mathcal{V}$ is a function $t:\mathcal{P} \rightarrow \mathcal{V}$. A functional estimator $t$ is said to be Fisher-consistent for $q$ \textit{iff} $t$ satisfies:

\begin{equation}
\begin{aligned}
\forall s&:\mathcal{R}(q)\rightarrow \Theta \text{ such that } q\circ s = 1_{\mathcal{V}},\\
& t\circ P \circ s  = 1_{\mathcal{V}} .
\end{aligned}
\end{equation}

\end{definition}
Importantly, $t$ does not depend on the choice of $s$, i.e. it does not depend on the choice of $\theta$ representatives for the $q$ values. As noted in the previous section, although $t$ is defined over all $\mathcal{P}$, or a dense subset of this, Fisher consistency only really makes sense over the range of $P$, i.e. we only need to check that we can correctly \textit{recover} a parameter when we know the data was generated from a model with that parameter value. 

\begin{theorem}
A parameter $q$ is identifiable iff there exists a Fisher consistent estimator for $q$.
\end{theorem}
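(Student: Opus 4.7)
The plan is to prove the two directions separately, mimicking the structure of the earlier theorem for full models but being careful with the quantification over sections $s$ that appears in the new definitions.

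For the easy direction, suppose a Fisher-consistent estimator $t:\mathcal{P}\rightarrow\mathcal{V}$ exists. Given any section $s$ of $q$ and any $\theta_1,\theta_2$ with $P\circ s\circ q(\theta_1)=P\circ s\circ q(\theta_2)$, I would apply $t$ to both sides and use Fisher consistency, which gives $t\circ P\circ s\circ q = 1_{\mathcal{V}}\circ q = q$; hence $q(\theta_1)=q(\theta_2)$, which immediately forces $s\circ q(\theta_1)=s\circ q(\theta_2)$ since $s$ is a function. This is exactly Definition \ref{def:ident-q-gen}, so by the equivalence theorem above, $q$ is identifiable in the standard sense as well.

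For the harder direction I would pivot to the set-theoretic formulation (Definition \ref{def:ident-q-set}), which has already been shown equivalent to Definition \ref{def:ident-q-gen}. Identifiability in this form gives that the sets $P[q^{-1}\{v\}]$ are pairwise disjoint as $v$ ranges over $\mathcal{R}(q)$. This is the crucial ingredient: it lets me define a map $t$ on $P(\Theta)=\bigcup_{v\in\mathcal{R}(q)} P[q^{-1}\{v\}]$ unambiguously by the rule
\begin{equation}
t(p) = v \quad \text{whenever } p \in P[q^{-1}\{v\}],
\end{equation}
and then extend $t$ to all of $\mathcal{P}$ arbitrarily (using the axiom of choice to pick a value in $\mathcal{V}$ on the complement, assuming $\mathcal{V}$ is nonempty). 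Now for any section $s$ of $q$ and any $v\in\mathcal{V}$, the element $s(v)$ lies in $q^{-1}\{v\}$ by definition of a section, so $P(s(v))\in P[q^{-1}\{v\}]$, and hence $t(P(s(v)))=v$. This is precisely the Fisher-consistency condition $t\circ P\circ s = 1_{\mathcal{V}}$, and it holds \emph{independently} of the chosen section $s$, as required.

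The main conceptual obstacle is exactly the universal quantification over sections in the Fisher-consistency definition: one might worry that different choices of representatives $s(v)$ could force conflicting values of $t$. The observation that resolves this is that the required uniform behaviour of $t$ across all sections is equivalent to the disjointness of the sets $P[q^{-1}\{v\}]$, and that disjointness is precisely what identifiability (in its set-theoretic form) provides. Thus the theorem reduces, modulo the equivalence of the three identifiability definitions already established, to a one-line well-definedness check for $t$, plus the axiom-of-choice style extension needed when $P$ is not surjective, paralleling the generalised-inverse construction used in the full-model theorem.
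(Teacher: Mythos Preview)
Your proof is correct. The easy direction (existence of a Fisher-consistent estimator implies identifiability) is essentially the same argument as in the paper, and in fact yours is slightly cleaner: you correctly obtain $q(\theta_1)=q(\theta_2)$ from $t\circ P\circ s = 1_{\mathcal{V}}$ and then deduce $s\circ q(\theta_1)=s\circ q(\theta_2)$, whereas the paper's write-up appears to conflate these two conclusions in the intermediate step.

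For the harder direction your route is genuinely different from the paper's. The paper stays with the choice-of-representatives definition (Definition~\ref{def:ident-q-gen}): it first observes that $P\circ s$ is injective for each section $s$, invokes the earlier full-model theorem to obtain a left inverse $t_s$ of $P\circ s$, and then argues that on $\mathcal{R}(P)$ the resulting $t_s$ is independent of $s$ before extending. You instead pivot to the set-theoretic definition (Definition~\ref{def:ident-q-set}) and define $t$ directly on $\mathcal{R}(P)$ via the partition $\{P[q^{-1}\{v\}]:v\in\mathcal{R}(q)\}$. This buys you a good deal: well-definedness of $t$ and independence from the choice of section both follow immediately from disjointness, with no need to construct section-dependent inverses and then reconcile them. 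The paper's route has the virtue of reusing the earlier machinery of generalised inverses verbatim, but yours is more direct. Two small remarks: strictly speaking the paper proves only the equivalence of Definitions~\ref{def:ident-q-stand} and~\ref{def:ident-q-gen}, so you are implicitly using the (trivial) equivalence of Definitions~\ref{def:ident-q-stand} and~\ref{def:ident-q-set}; and your extension of $t$ to $\mathcal{P}\setminus\mathcal{R}(P)$ does not actually need the axiom of choice, since a single constant value from $\mathcal{V}$ suffices.
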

\begin{proof}
Firstly, suppose that $q$ is identifiable. Then for any $s$ with $q \circ s = 1_{\mathcal{V}}$, $P\circ s$ is a 1-1 function on $\mathcal{V}$. By the previous section, this implies the existence of a $t$, possibly depending on the choice of $s$, such that $t\circ (P\circ s) = 1_{\mathcal{V}}$. We will show that $t:\mathcal{P} \rightarrow \mathcal{V}$ is in fact independent of $s$, at least when considered as an estimator $t'$ defined over distributions in the range of $P$, i.e. as $t':\mathcal{R}(P) \rightarrow \mathcal{V}$ (distributions outside of the range of $P$ cannot make $t$ Fisher inconsistent, since there is no `true' parameter to recover). 

To do this, suppose we have two pairs $s_1, t_1$ and $s_2, t_2$ satisfying the above and consider, for arbitrary $v \in \mathcal{V}$, an element $\mathbb{P}_{v}$ of $\mathcal{R}(P) \subseteq \mathcal{P}$, obtained by two different choices of $s$, $s_1$ and $s_2$, respectively. That is, assume that we have

\begin{equation}
\begin{aligned}
\mathbb{P}_{v} = P \circ s_1 (v) = P \circ s_2 (v),
\end{aligned}
\end{equation}
regardless of whether $s_1 = s_2$. We show that $t_1(\mathbb{P}_{v}) = t_2(\mathbb{P}_{v}) = v$, i.e. that they define the same function $t':\mathcal{R}(P) \rightarrow \mathcal{V}$ from distributions in the range of the forward mapping to parameter values. By Fisher consistency, we know that for all $v$ we have

\begin{equation}
    t_1 (P \circ s_1 (v)) = t_2 (P \circ s_2 (v)) =v,
\end{equation}
and hence

\begin{equation}
\begin{aligned}
&t_1(\mathbb{P}_v) = t_2(\mathbb{P}_v) = v.
\end{aligned}
\end{equation}
Thus we have a unique $t':\mathcal{R}(P) \rightarrow v$ independent of the choice of $s$. Clearly if $q$ is an identifiable parameter then we can extend $t'$ to a Fisher consistent estimator $t :\mathcal{P} \rightarrow v$ defined on all of $\mathcal{P}$, since $t$ can take any value for distributions not in the range of $P$ without sacrificing Fisher consistency.

Finally we prove that if a Fisher consistent estimator for a parameter $q$ exists, then $q$ is identifiable. Suppose that a Fisher consistent estimator, $t$, exists for $q$. Now suppose that for arbitrary $s$ such that $q \circ s = 1_{\mathcal{V}}$, and for arbitrary $\theta_1, \theta_2$, we have $P\circ s\circ q(\theta_1) = P \circ s \circ q(\theta_2)$. Then we have, by Fisher consistency,

\begin{equation}
    t \circ P\circ s\circ q(\theta_1) = q(\theta_1)
\end{equation}
and

\begin{equation}
    t \circ P\circ s\circ q(\theta_2) = q(\theta_2).
\end{equation}
But $t \circ P\circ s\circ q(\theta_1) = t(P\circ s\circ q(\theta_1)) = t(P\circ s\circ q(\theta_2)) = t \circ P\circ s\circ q(\theta_1)$, since $P\circ s\circ q(\theta_1) = P \circ s \circ q(\theta_2)$ by assumption and $t$ is a function. Thus $q(\theta_1) = q(\theta_2)$ and $s\circ q(\theta_1) = s\circ q(\theta_2)$, and hence we have identifiability.
\end{proof}

\section{Continuity considerations: the break between identifiability and estimability}
So far we have shown, under very minimal assumptions, that identifiability leads to the existence of a suitable Fisher-consistent estimator. Now we show that, despite appearances, this mere existence result is not sufficient to guarantee what we will call estimability, even in principle. The key (missing) requirement is a stability condition: we should still be able to estimate the quantity of interest with just an `infinitesimal' amount of uncertainty about the population probability distribution. This requires more structure than identifiability and Fisher consistency, and is hence more difficult to discuss in general terms. Without this, however, estimation -- in any sense connected to the real world -- is impossible. Furthermore, stability conditions lead to a need to reconsider the `existence' part of the above argument, in terms of the domain of definition of the generalised inverse/estimator. Thus we will see that identifiability does not guarantee estimability, and there are many cases where any estimator of a given identified quantity has unbounded sensitivity and hence is inestimable in principle. 

\subsection{Well-posed and ill-posed problems}
A number of concepts have appeared in the inverse problems and statistical literature, all related in some way to the realisation that, in addition to existence and/or uniqueness, stability, continuity and sensitivity considerations are required. We briefly recap a few general concepts here, before giving specific examples and references in the following subsections.

Firstly, in the inverse problems literature, and in the mathematical literature more generally, it is common to follow \citet{hadamard1902problemes} and call a problem \textit{well-posed} if the following three semi-formal criteria hold \citep[see also][]{engl1996regularization,tikhonov1977solutions}:

\begin{itemize}
    \item For all admissible problem data, a solution \textit{exists}
    \item For all admissible problem data, the solution is \textit{unique}
    \item The solution depends \textit{continuously} on the problem data, i.e. is \textit{stable}.
\end{itemize}

The problem then is called \textit{ill-posed} if at least one of the above conditions does not hold. To make these conditions fully formal requires specific definitions to be given to e.g. the admissible problem data of interest, the relevant topology for continuity, and so on, but these capture the basic idea over a range of cases. \citet{engl1996regularization,tikhonov1977solutions} provide good overviews of classical, mostly deterministic, inverse problems theory. Other references that we draw on extensively include \citet{NASHED1976} and \citet{groetsch1977generalized}.

In general failure to satisfy either of the first two criteria can essentially be solved in an algebraic sense by the concept of a generalised inverse, discussed earlier, which amounts to solving the problem `as well as can be'. Aside from the condition of being an inner and outer inverse, a unique generalised inverse requires a choice of up to two idempotent maps (projections) in general; a choice of these can be shown to define a unique algebraic generalised inverse \citep{NASHED1976}. In the case of an injective map, we saw above that failure of surjectivity leads to the idempotent but non-identity map $P\circ T$ on data distributions. Since the `model resolution' operator is the identity in this case, only the data resolution projection operator needs to be chosen to give a unique solution. In statistics a common choice leads to satisfying the data in the least squares sense. Similarly, failure of injectivity (i.e. failure of identifiability) can be `solved' by considering the associated idempotent (but now non-identity) mapping $T\circ P$ defined on the \textit{parameter/model} space. In statistics and inverse problems, a common choice to resolve non-uniqueness is to take the model with the smallest norm, and so on. For a general forward mapping $P$, which is neither an injective nor surjective mapping, one must choose two idempotent mappings -- one on the data distribution space, one on the parameter space. Combined with the inner and outer inverse conditions, this leads to a unique algebraic generalised inverse \citep[again, see][]{NASHED1976}.

\subsection{Continuity, stability, and sensitivity}
\subsubsection{Ideas from inverse problems and functional analysis}
The third Hadamard condition given above is typically more difficult and subtle to satisfy, and is \textit{not} resolved by the usual, essentially algebraic, concept of a generalised inverse. Furthermore, as discussed by \citet{NASHED1976}, once continuity conditions come into play, a distinction between algebraic generalised inverses and topological generalised inverses must be introduced to handle the surprisingly subtle interplay between existence, uniqueness, and continuity. In particular, much more attention must be given to the relevant domains and codomains of definition, and a distinction between algebraic subspace complements and topological subspace complements becomes important.

First we give the standard definitions of algebraic and topological complementary subspaces. Again we largely follow \citet{NASHED1976} and, in places, \citet{groetsch1977generalized}. 

\begin{definition}[Algebraic and topological complementary subspaces]
Given a vector space $\mathcal{A}$, we say that two subspaces of $\mathcal{A}$, $\mathcal{A}_1$ and $\mathcal{A}_2$, are complementary subspaces, or algebraic complements, in $\mathcal{A}$ iff 

\begin{equation}
    \mathcal{A} = \mathcal{A}_1 \oplus \mathcal{A}_2
\end{equation}
where $\oplus$ denotes the algebraic direct sum, i.e. if every $a \in \mathcal{A}$ can be uniquely written as the sum $a_1 + a_2$ of an element $a_1 \in \mathcal{A}_1$ and an element $a_2 \in \mathcal{A}_2$. These are called topological complementary subspaces, or topological complements, if, in addition to being algebraic complements, any associated linear projector (idempotent map) $P$ with $\mathcal{R}(P) = \mathcal{A}_1$ and $\mathcal{R}(I-P) = \mathcal{A}_2$ is continuous. Or, equivalently, if these subspaces are both closed.
\end{definition}
The existence of topological complements for any closed subspace is not guaranteed in arbitrary Banach spaces, but is guaranteed in any Hilbert space. In Banach spaces we must in general add the supposition that topological complements exist, and we will assume this in what follows.

Now we consider the relevant domains and codomains, given $P:\Theta \rightarrow \mathcal{P}$. In particular we suppose that $P$'s codomain can be written as 

\begin{equation}
    \mathcal{P} = \overline{\mathcal{R}(P)} \oplus \mathcal{C}
\end{equation}
where $\mathcal{C}$ is the topological complement of the \textit{closure} of the range $R(P)$ of $P$. From the definition of topological complementary subspaces, this complement is always closed. In a Hilbert space we have $\mathcal{C} = \mathcal{R}(P)^\perp$ and so

\begin{equation}
    \mathcal{P} = \overline{\mathcal{R}(P)} \oplus \mathcal{R}(P)^\perp.
\end{equation}
On the other hand, we take the natural domain of definition $\mathcal{D}(T)$ of our generalised inverse (estimator) $T$ to be 

\begin{equation}
    \mathcal{D}(T) = \mathcal{R}(P) \oplus \mathcal{C},
\end{equation}
where $\mathcal{C}$ is the same as before (i.e. the topological complement of the closure of the range). Again in a Hilbert space we have $\mathcal{C} = \mathcal{R}(P)^\perp$ and so

\begin{equation}
    \mathcal{D}(T) = \mathcal{R}(P) \oplus \mathcal{R}(P)^\perp.
\end{equation}
Note that unless $\mathcal{R}(P) = \overline{\mathcal{R}(P)}$, i.e. unless the range of $P$ is closed, then $\mathcal{D}(T) \neq \mathcal{P}$. This seemingly subtle distinction will play an important role in establishing the continuity, or not, of the generalised inverse.

\subsubsection{Conditions for continuous/discontinuous generalised inverses}
Here we consider various special cases of the general requirement that (the generalised inverse) $T$ be continuous. The linear theory is relatively standard and well-developed, while the nonlinear case less so. Nevertheless, we can also give various general conditions for when a nonlinear problem is ill- or well-posed.

Firstly, consider the case where the forward mapping $P$ is an everywhere-defined, linear and continuous operator which maps between two Hilbert spaces\footnote{Many of these results can also be extended to closed, densely-defined but possibly unbounded linear operators with domains and codomains that are Banach spaces, or at least Banach spaces that have topological complements.}. Many of the following results are essentially consequences of the famous Open Mapping Theorem (Banach and Schauder), extended to generalised inverses. The standard theorem can be stated in our setting as:

\begin{theorem}[Open Mapping Theorem]
Consider a linear operator $P:\Theta \rightarrow \mathcal{P}$. If $P$ has closed range $\mathcal{R}(P)$, then the image $P[\mathcal{O}]$ of any open set $\mathcal{O}\subset \Theta$ is an open set in $\mathcal{R}(P)$. Similarly, the image of any closed subset is closed. If $P$ is also injective and surjective, then $P^{-1}:\mathcal{P} \rightarrow \Theta$ exists and is continuous (bounded).
\end{theorem}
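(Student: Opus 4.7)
The plan is to reduce this to the classical Open Mapping Theorem for Banach spaces and then read off each of the three assertions in turn. The key preliminary observation is that since $\mathcal{R}(P)$ is a closed subspace of the Banach space $\mathcal{P}$, it is itself a Banach space in the inherited norm; hence the corestriction $\tilde{P}: \Theta \to \mathcal{R}(P)$ is a continuous linear surjection between Banach spaces, which is exactly the setting of the classical result.

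For the openness claim, I would carry out the standard two-step argument. First, by expressing $\Theta$ as the countable union $\bigcup_n nB$ of scalings of the closed unit ball $B \subset \Theta$ and applying the Baire Category Theorem in the complete space $\mathcal{R}(P)$, some $\overline{\tilde{P}[nB]}$ must have nonempty interior; by linearity and symmetry of $B$ this forces the closure $\overline{\tilde{P}[B]}$ to contain a ball $B_{\mathcal{R}(P)}(0,2r)$ around the origin for some $r>0$. The second, more delicate step removes the closure: given $y$ with $\|y\| < r$, build inductively a sequence $x_k \in \Theta$ with $\|x_k\| \leq 2^{-k}$ and $\|y - \tilde{P}(x_1 + \cdots + x_k)\| < r\cdot 2^{-k}$, by repeatedly exploiting the density conclusion from step one at successively smaller scales. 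Completeness of $\Theta$ then guarantees that $x := \sum_k x_k$ converges with $\|x\| \leq 2$ and $\tilde{P}x = y$, yielding $B_{\mathcal{R}(P)}(0,r) \subseteq \tilde{P}[2B]$. Linearity and translation invariance upgrade this to the assertion that $\tilde{P}$ maps every open set in $\Theta$ to an open set in $\mathcal{R}(P)$.

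For the remaining claims I would specialise to the case that $P$ is additionally injective and surjective, so that $\tilde{P} = P$ is bijective onto $\mathcal{P}$. Openness of $P$ is then precisely continuity of the inverse function $P^{-1}: \mathcal{P} \to \Theta$; since $P^{-1}$ is linear, continuity is equivalent to boundedness, giving the final clause of the theorem. Moreover, continuity of $P^{-1}$ gives that preimages under $P^{-1}$ of closed sets are closed, i.e.\ $P$ sends closed subsets of $\Theta$ to closed subsets of $\mathcal{P}$, which is the ``similarly'' claim (in the general non-injective case, this statement is understood modulo the kernel, or equivalently via the induced injection on $\Theta / \ker(P)$, to which the same reasoning applies).

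The main obstacle is the iterative approximation step used to remove the closure after the Baire argument; this is where completeness of $\Theta$ is genuinely needed, and the bounds on $\|x_k\|$ have to be chosen carefully so that the telescoping series both converges and lands inside a ball of controlled radius. Everything else amounts to bookkeeping once $\mathcal{R}(P)$ is identified as a Banach space in its own right and the bijective restriction is set up.
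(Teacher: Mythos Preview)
The paper does not actually prove this theorem: immediately after the statement it simply says ``Proofs may be found in textbooks on functional analysis.'' Your sketch is precisely the standard textbook argument (Baire category to get a ball in the closure of the image of the unit ball, then the successive-approximation step to strip the closure using completeness of $\Theta$), so there is nothing to compare against --- you have supplied what the paper deliberately omitted.

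One small remark on content rather than comparison: the clause ``similarly, the image of any closed subset is closed'' is only straightforward in the injective case, where it follows from continuity of $P^{-1}$ as you note. Your parenthetical about passing to $\Theta/\ker(P)$ in the non-injective case is not quite enough, since the quotient map $\Theta \to \Theta/\ker(P)$ is open but not closed in general (e.g.\ coordinate projections on $\mathbb{R}^2$), so arbitrary closed sets need not have closed image. In the paper's setting $P$ is identifiable (injective) anyway, so this does not affect anything downstream, but it is worth being aware that the ``similarly'' in the stated theorem is looser than it looks.
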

Proofs may be found in textbooks on functional analysis. This result implies the Closed Graph Theorem:

\begin{theorem}[Closed Graph Theorem]
Let $P:\Theta \rightarrow \mathcal{P}$ be a graph-closed operator, i.e. have closed graph where the graph of an operator is given by

\begin{equation}
    \mathcal{G}(P) = \{(\theta,P(\theta)\ \vert\ \theta \in \Theta \}.
\end{equation}
Then $P$ is continuous.
\end{theorem}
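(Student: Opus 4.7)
The plan is to reduce the Closed Graph Theorem to the Open Mapping Theorem that was just stated, by interpreting the graph itself as a Banach space and factoring $P$ through two coordinate projections. Implicitly I will assume the standard hypotheses under which this result makes sense here, namely that $\Theta$ and $\mathcal{P}$ are Banach spaces (e.g.\ the Hilbert spaces already in force) and that $P$ is linear; without these, graph-closedness alone does not imply continuity.

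First I would equip $\Theta \times \mathcal{P}$ with the product norm $\lVert(\theta,\mathbb{P})\rVert = \lVert\theta\rVert + \lVert\mathbb{P}\rVert$, so that it becomes a Banach space. The assumption that $\mathcal{G}(P)$ is closed in $\Theta \times \mathcal{P}$ then makes $\mathcal{G}(P)$ itself a Banach space as a closed linear subspace of a Banach space. Next I would introduce the two coordinate projections restricted to the graph,
\begin{equation}
\pi_1:\mathcal{G}(P) \to \Theta, \ (\theta, P(\theta)) \mapsto \theta, \qquad \pi_2:\mathcal{G}(P) \to \mathcal{P}, \ (\theta, P(\theta)) \mapsto P(\theta),
\end{equation}
both of which are linear and bounded by the product-norm estimate $\lVert\pi_i(\theta,P(\theta))\rVert \leq \lVert(\theta,P(\theta))\rVert$. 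The map $\pi_1$ is additionally a bijection onto $\Theta$, since each $\theta$ has exactly one preimage $(\theta, P(\theta))$ in the graph.

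Then I would invoke the Open Mapping Theorem stated just above: $\pi_1$ is a continuous linear bijection between Banach spaces, so its inverse $\pi_1^{-1}:\Theta \to \mathcal{G}(P)$, $\theta \mapsto (\theta, P(\theta))$, is continuous. Finally I would express $P$ as the composition $P = \pi_2 \circ \pi_1^{-1}$, which is continuous as a composition of continuous linear maps, completing the proof.

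The main substantive step is the appeal to the Open Mapping Theorem to get continuity of $\pi_1^{-1}$; everything else is essentially bookkeeping. The only real subtlety is the point noted above about hypotheses: the theorem as stated in the excerpt is silent about linearity and about $\Theta,\mathcal{P}$ being Banach, but these are essential, since without completeness the graph need not be a Banach space and the Open Mapping Theorem is unavailable. Assuming the same ambient setting used for the Open Mapping Theorem, the proof is the three-line factorisation sketched above.
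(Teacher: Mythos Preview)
Your proposal is correct and is exactly the standard reduction the paper has in mind: the paper does not actually prove the Closed Graph Theorem but simply states it after the Open Mapping Theorem with the remark that ``this result implies the Closed Graph Theorem'' and defers to textbooks, and your argument is precisely the textbook reduction via the graph-as-Banach-space and coordinate projections. Your explicit note about the implicit linearity and completeness hypotheses is also well taken, since the paper's statement is indeed silent on these but the surrounding discussion assumes the Hilbert/Banach linear-operator setting.
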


These theorems extend to similar results concerning generalised inverses, i.e. Fisher-consistent estimators, e.g.:

\begin{theorem}
The generalised inverse (estimator) $T:\mathcal{R}(P) \oplus \mathcal{R}(P)^\perp \subseteq \mathcal{P} \rightarrow \Theta$, associated with the linear, continuous forward operator $P:\Theta \rightarrow \mathcal{P}$, is a linear operator which is bounded (continuous) if and only if the range of $P$, $\mathcal{R}(P)$, is closed. In this case $\mathcal{R}(P) \oplus \mathcal{R}(P)^\perp = \mathcal{P}$ and hence $T$ is defined on all of $\mathcal{P}$. Furthermore, since $\mathcal{R}(P\circ T) = \mathcal{R}(P)$, we have that $\mathcal{R}(P)$ is closed, and hence $T$ is continuous, iff the range $\mathcal{R}(P\circ T)$ of the idempotent operator $P\circ T$ is closed.
\end{theorem}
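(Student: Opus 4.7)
The plan is to prove the biconditional by attacking each direction separately, relying on the Hilbert space structure and the earlier Open Mapping / Bounded Inverse Theorem. Linearity of $T$ should be treated first as a preliminary: since $P$ is linear, its kernel $\mathcal{N}(P)$ is a closed subspace and so $\Theta = \mathcal{N}(P) \oplus \mathcal{N}(P)^\perp$; the algebraic generalised inverse obtained by composing (i) the orthogonal projection onto $\overline{\mathcal{R}(P)}$ with (ii) the set-theoretic inverse of $P$ restricted to $\mathcal{N}(P)^\perp$ is a composition of linear maps, and is uniquely determined on $\mathcal{R}(P) \oplus \mathcal{R}(P)^\perp$ by the inner/outer inverse and projection conditions recalled above. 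So $T$ is linear on its natural domain.

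For the forward direction, assume $T$ is bounded and show $\mathcal{R}(P)$ is closed. Take any sequence $y_n = P(\theta_n) \in \mathcal{R}(P)$ with $y_n \to y \in \mathcal{P}$. Since $y_n$ lies in $\mathcal{R}(P)$, the generalised inverse relation $P \circ T \circ P = P$ implies $P(T(y_n)) = y_n$. Continuity of $T$ gives $T(y_n) \to T(y)$, and then continuity of $P$ gives $P(T(y_n)) \to P(T(y))$, so $y = P(T(y)) \in \mathcal{R}(P)$. Hence $\mathcal{R}(P)$ is closed.

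For the reverse direction, assume $\mathcal{R}(P)$ is closed. Then, by the Hilbert space projection theorem, $\mathcal{P} = \mathcal{R}(P) \oplus \mathcal{R}(P)^\perp$ and in particular $\mathcal{D}(T) = \mathcal{P}$. Consider the restriction $P_0 : \mathcal{N}(P)^\perp \to \mathcal{R}(P)$. By construction $P_0$ is a continuous linear bijection between two Hilbert spaces (the range being closed makes $\mathcal{R}(P)$ a Hilbert space in its own right), so the Open Mapping Theorem (equivalently, the Bounded Inverse Theorem) yields a continuous inverse $P_0^{-1} : \mathcal{R}(P) \to \mathcal{N}(P)^\perp$. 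Writing $T = P_0^{-1} \circ \Pi_{\mathcal{R}(P)}$, where $\Pi_{\mathcal{R}(P)}$ is the orthogonal projection onto $\mathcal{R}(P)$, we exhibit $T$ as a composition of two bounded linear maps, so $T$ is bounded on $\mathcal{P}$.

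Finally, the equivalence $\mathcal{R}(P) = \mathcal{R}(P \circ T)$ follows from $P \circ T \circ P = P$: on the one hand $\mathcal{R}(P \circ T) \subseteq \mathcal{R}(P)$ trivially, and on the other hand $\mathcal{R}(P) = \mathcal{R}(P \circ T \circ P) \subseteq \mathcal{R}(P \circ T)$, so closedness of one range is equivalent to closedness of the other, which chains with the previous biconditional. The main obstacle I anticipate is bookkeeping for the reverse direction: we must verify that the $P_0^{-1} \circ \Pi_{\mathcal{R}(P)}$ constructed here really coincides with the generalised inverse $T$ as defined on $\mathcal{R}(P) \oplus \mathcal{R}(P)^\perp$ — this amounts to checking that it satisfies the inner and outer inverse relations plus the two idempotency/projection conditions singling out the algebraic generalised inverse, which is straightforward but is the one place the proof must genuinely invoke uniqueness of the generalised inverse given the two projector choices recalled in the preceding section.
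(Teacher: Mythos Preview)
Your overall strategy is sound and takes a genuinely different route from the paper's sketch. For the forward implication the paper argues via the preimage characterisation of continuity: the inverse image of the closed set $\Theta$ under $T$ is the whole domain $\mathcal{D}(T) = \mathcal{R}(P)\oplus\mathcal{R}(P)^\perp$, which is not closed in $\mathcal{P}$ when $\mathcal{R}(P)$ is non-closed. You instead run a direct sequential argument. For the reverse implication the paper simply invokes the Closed Graph Theorem, while you explicitly factor $T = P_0^{-1}\circ\Pi_{\mathcal{R}(P)}$ and appeal to the Bounded Inverse Theorem on $P_0$. Your version is more constructive and makes the structure of $T$ visible (at the price of the uniqueness bookkeeping you flag); the paper's is terser but leaves more implicit.

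There is one genuine gap in your forward direction as written. You say ``continuity of $T$ gives $T(y_n)\to T(y)$'', but the limit $y$ lies in $\overline{\mathcal{R}(P)}$, and if $\mathcal{R}(P)$ is not closed then $y$ need not belong to $\mathcal{D}(T)=\mathcal{R}(P)\oplus\mathcal{R}(P)^\perp$, so $T(y)$ may be undefined --- the very point at issue. The repair is immediate: boundedness of $T$ on its domain gives $\lVert T(y_n)-T(y_m)\rVert \le \lVert T\rVert\,\lVert y_n-y_m\rVert$, so $(T(y_n))$ is Cauchy and converges to some $\theta^*\in\Theta$; then continuity of $P$ yields $y=\lim P(T(y_n))=P(\theta^*)\in\mathcal{R}(P)$. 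With that adjustment your argument goes through.
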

\begin{proof}
A sketch of a proof goes as follows. Recall that a function is continuous \textit{iff} its inverse set image of any closed set is also closed. The inverse set image of the closed set $\Theta$ under the generalised inverse is $\mathcal{R}(P) \oplus \mathcal{R}(P)^\perp \subseteq \mathcal{P}$, which cannot be closed if $\mathcal{R}(P)$ is non-closed. Hence continuity implies closed range. The converse, that closed range implies continuous generalised inverse, follow directly from the Closed Graph Theorem.
\end{proof}

This theorem is stated for arbitrary Hilbert space domains and codomains in e.g. \citep{NASHED1976,engl1996regularization,groetsch1977generalized}; here we have simply translated this to the special case where the domain of $P$ is $\Theta$ and the codomain of $P$ is $\mathcal{P}$. Importantly, Hilbert spaces are often very natural settings for the domains and codomains in inverse problems and statistical problems \citep{Small2011-cl,Evans2002-as,vapnik2013nature,groetsch1977generalized,engl1996regularization,Ramsay2005-qw}, so this is no real restriction, though more general results in e.g. Banach spaces also exist. 

The following \textit{stability} condition is often more convenient to work with than the closed range condition \citep{drabek2009methods,groetsch1977generalized}:

\begin{theorem}
If $P:\Theta \rightarrow \mathcal{P}$ is a linear, continuous, injective (identifiable) forward operator between Hilbert (or Banach) spaces, then its range $\mathcal{R}(P)$ is closed iff there is a positive constant $c$ such that for all $\theta \in \Theta$:

\begin{equation}
    \Vert P(\theta) \Vert \geq c\Vert \theta \Vert.
\end{equation}
\end{theorem}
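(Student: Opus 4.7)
The plan is to prove the two directions separately, using the Open Mapping Theorem for one direction and a Cauchy-sequence/completeness argument for the other. Both halves are essentially standard functional analysis, and I would not expect either to be the main technical obstacle; the subtle point is rather making sure the operator is viewed with the correct codomain in the first half.

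For the forward direction, suppose $\mathcal{R}(P)$ is closed. Then $\mathcal{R}(P)$, being a closed subspace of a Banach space, is itself a Banach space. I would restrict the codomain and consider the operator $\tilde P : \Theta \to \mathcal{R}(P)$ defined by $\tilde P(\theta) = P(\theta)$. By hypothesis $\tilde P$ is linear, continuous, injective, and, by construction, surjective. The Open Mapping Theorem (restated in the excerpt) then yields a continuous linear inverse $\tilde P^{-1} : \mathcal{R}(P) \to \Theta$, i.e.\ there exists $M > 0$ with $\Vert \tilde P^{-1}(y)\Vert \leq M \Vert y \Vert$ for all $y \in \mathcal{R}(P)$. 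Substituting $y = P(\theta)$ gives $\Vert \theta \Vert \leq M \Vert P(\theta)\Vert$, so $c = 1/M$ satisfies the required inequality.

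For the reverse direction, suppose there is a $c > 0$ with $\Vert P(\theta)\Vert \geq c\Vert \theta\Vert$ for all $\theta$. To show $\mathcal{R}(P)$ is closed, take any sequence $(y_n) \subset \mathcal{R}(P)$ converging to some $y \in \mathcal{P}$ and write $y_n = P(\theta_n)$ using injectivity to make the $\theta_n$ unique. Since $(y_n)$ is Cauchy and $P$ is linear, the lower bound gives
\begin{equation}
\Vert \theta_n - \theta_m \Vert \leq \frac{1}{c}\Vert P(\theta_n - \theta_m)\Vert = \frac{1}{c}\Vert y_n - y_m \Vert,
\end{equation}
so $(\theta_n)$ is Cauchy in $\Theta$. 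By completeness of $\Theta$, $\theta_n \to \theta$ for some $\theta \in \Theta$; by continuity of $P$, $y_n = P(\theta_n) \to P(\theta)$, so $y = P(\theta) \in \mathcal{R}(P)$. Thus $\mathcal{R}(P)$ is closed.

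The only place where one must be careful is the forward direction: the Open Mapping Theorem as stated needs a surjection, so one has to explicitly restrict the codomain to $\mathcal{R}(P)$ before invoking it — this is where closedness of $\mathcal{R}(P)$ is used to ensure the codomain is itself a Banach space. Everything else is routine, and the same proof goes through in Banach spaces without modification, since neither inner products nor orthogonal complements are used.
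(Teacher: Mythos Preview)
Your proof is correct and entirely standard: the forward direction via the Open Mapping Theorem applied to the restricted-codomain operator $\tilde P:\Theta\to\mathcal{R}(P)$, and the reverse via a Cauchy-sequence argument exploiting completeness, are exactly the textbook arguments. Note, however, that the paper does not actually give its own proof of this theorem --- it simply cites \citet{drabek2009methods,groetsch1977generalized} for proofs --- so there is nothing in the paper to compare your argument against; your write-up would serve perfectly well as the missing proof.
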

See e.g. \citep{drabek2009methods,groetsch1977generalized} for proofs. In such a case we say the forward mapping $P$ is \textit{bounded away from zero}. Importantly, one can have operators that are invertible and yet not bounded away from zero -- in such cases one has a sequence of singular values with zero as a limit point.

We can also see how the above gives a stability condition on the \textit{estimator/estimates} in the sense that it means there exists a $c > 0$ such that 

\begin{equation}
    \Vert \theta_1 - \theta_2 \Vert \leq \frac{1}{c}\Vert P(\theta_1) -P(\theta_2) \Vert
\end{equation}
which follows directly from linearity and taking $\theta = \theta_1 - \theta_2$. That is, defining $\mathbb{P}_1 = P(\theta_1)$ and $\mathbb{P}_2 = P(\theta_2)$, we have

\begin{equation}
    \Vert T(\mathbb{P}_1) - T(\mathbb{P}_2) \Vert \leq \frac{1}{c}\Vert \mathbb{P}_1 -\mathbb{P}_2 \Vert
\end{equation}
for a Fisher consistent estimator $T$, which is hence a bounded/continuous linear operator. On the other hand, since $P$ is bounded, we also have, for some $b>0$ and for all $\theta$,

\begin{equation}
    \Vert P(\theta) \Vert \leq b\Vert \theta \Vert.
\end{equation}
This means that we can write the continuity of $T$ as

\begin{equation}
    \frac{\Vert T(\mathbb{P}_1) - T(\mathbb{P}_2) \Vert}{\Vert T(\mathbb{P}_2)\Vert} \leq \frac{b}{c}\frac{\Vert \mathbb{P}_1 -\mathbb{P}_2 \Vert}{\Vert \mathbb{P}_2\Vert}.
\end{equation}
For finite-dimensional linear problems, the range is in fact always closed and hence all finite-dimensional linear problems are \textit{formally} stable in the above sense. As is well-known in the applied mathematics literature, however, finite-dimensional problems can exhibit a form of instability essentially equivalent to ill-posedness, called \textit{ill-conditioning}. To see this, consider the stability theorem for finite-dimensional inverse problems in the form:

\begin{theorem}
If $P:\Theta \rightarrow \mathcal{P}$ is a linear, continuous, injective (identifiable) and finite-dimensional forward operator then 

\begin{equation}
    \frac{\Vert T(\mathbb{P}_1) - T(\mathbb{P}_2) \Vert}{\Vert T(\mathbb{P}_2)\Vert} \leq \frac{\sigma_{\text{max}}}{\sigma_{\text{min}}}\frac{\Vert \mathbb{P}_1 -\mathbb{P}_2 \Vert}{\Vert \mathbb{P}_2\Vert},
\end{equation}
\end{theorem}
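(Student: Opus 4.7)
The plan is to reduce this statement to the previous stability theorem by identifying the optimal constants $b$ and $c$ with $\sigma_{\max}$ and $\sigma_{\min}$ respectively. Concretely, the previous bound already gives
\begin{equation}
    \frac{\Vert T(\mathbb{P}_1) - T(\mathbb{P}_2) \Vert}{\Vert T(\mathbb{P}_2)\Vert} \leq \frac{b}{c}\frac{\Vert \mathbb{P}_1 -\mathbb{P}_2 \Vert}{\Vert \mathbb{P}_2\Vert}
\end{equation}
whenever $P$ has closed range and is injective, with $c\Vert\theta\Vert \leq \Vert P(\theta)\Vert \leq b\Vert\theta\Vert$. In finite dimensions the range of any linear operator is automatically closed, so the hypothesis of the preceding theorem is met for free, and all that remains is to pin down the sharpest $b$ and $c$ in terms of spectral data of $P$.

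First I would invoke the singular value decomposition $P = U \Sigma V^{*}$, where $U$ and $V$ are unitary and $\Sigma$ is the diagonal matrix of singular values $\sigma_{\max} = \sigma_1 \geq \cdots \geq \sigma_n = \sigma_{\min}$. Since unitary operators preserve the Hilbert norm, the variational characterisation gives
\begin{equation}
    \sigma_{\max} = \sup_{\theta \neq 0} \frac{\Vert P(\theta) \Vert}{\Vert \theta \Vert}, \qquad \sigma_{\min} = \inf_{\theta \neq 0} \frac{\Vert P(\theta) \Vert}{\Vert \theta \Vert},
\end{equation}
so one may take $b = \sigma_{\max}$ (continuity of $P$) and $c = \sigma_{\min}$ (the lower bound that witnesses closed range for an injective finite-dimensional $P$). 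Injectivity (identifiability) is exactly the statement $\sigma_{\min} > 0$, which ensures the ratio $\sigma_{\max}/\sigma_{\min}$ is finite and the division in the conclusion is legal.

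Substituting $b = \sigma_{\max}$ and $c = \sigma_{\min}$ into the previous theorem's inequality yields the desired relative-perturbation bound. The step I expect to require the most care is simply being explicit about the norm conventions: the $\sigma_i$ are defined with respect to the Hilbert-space inner products on $\Theta$ and $\mathcal{P}$ used throughout, so the identification of the operator norm with $\sigma_{\max}$ and of the lower bound with $\sigma_{\min}$ must be stated with respect to these same norms, rather than some other matrix norm that might yield a different condition number. Beyond that, the argument is a direct specialisation of the preceding infinite-dimensional stability estimate, and no genuinely new analytic input is required.
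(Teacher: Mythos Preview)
Your proposal is correct and matches the paper's intended approach: the paper does not give a self-contained proof of this statement but instead presents it as the finite-dimensional specialisation of the preceding $b/c$ relative-error bound, citing \citet{aster2013parameter} for the elementary derivation via the singular value decomposition. Your plan---invoke the SVD, identify $b=\sigma_{\max}$ and $c=\sigma_{\min}$ from the variational characterisation of the extremal singular values, note that injectivity is precisely $\sigma_{\min}>0$, and substitute into the previous inequality---is exactly that elementary derivation made explicit.
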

where $\sigma_{\text{max}}$ and $\sigma_{\text{min}}$ are the maximum and minimum singular values of $P$, respectively \citep[see e.g.][for an elementary derivation]{aster2013parameter}. The ratio $\frac{\sigma_{\text{max}}}{\sigma_{\text{min}}}$ is known as the \textit{condition number}. Thus in such problems, while they are technically well-posed, they can be ill-posed for any achievable precision, e.g. to machine zero, or at least exhibit essentially uncontrollable instability. This leads to the following semi-formal definition:

\begin{definition}[Ill-conditioned inverse problems]
A finite-dimensional linear inverse problem is known as ill-conditioned (c.f. ill-posed in the infinite-dimensional case) if the condition number of the forward mapping is unacceptably large.
\end{definition}

Finally, we briefly consider nonlinear problems. Linear operators have the property that they are either continuous everywhere or nowhere, and that continuity and boundedness are equivalent. This makes continuity easy to characterise in terms of the (mapping from the domain to) range only. Nonlinear operators do not have this property, however, so are much more difficult to deal with in general. We will not attempt to survey the literature on nonlinear functional analysis here, but note that the following (uni-directional) result does still hold (we assume for convenience here that $\Theta$ and $\mathcal{P}$ are Hilbert spaces):

\begin{theorem}
The generalised inverse (estimator) $T:\mathcal{R}(P) \oplus \mathcal{R}(P)^\perp \subseteq \mathcal{P} \rightarrow \Theta$, associated with the (possibly nonlinear) continuous forward operator $P:\Theta \rightarrow \mathcal{P}$, has at least one point of discontinuity if the range of $P$, $\mathcal{R}(P)$, is non-closed.
\end{theorem}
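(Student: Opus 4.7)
The plan is to argue by contradiction along a sequence witnessing the non-closedness of $\mathcal{R}(P)$, playing any hypothesised continuity of $T$ off against the given continuity of $P$. First I would exploit non-closedness to select $\mathbb{Q}\in\overline{\mathcal{R}(P)}\setminus\mathcal{R}(P)$ together with a sequence $\{\mathbb{P}_n\}\subset\mathcal{R}(P)$ such that $\mathbb{P}_n\to\mathbb{Q}$ in $\mathcal{P}$. By Fisher consistency each $\mathbb{P}_n$ has a well-defined preimage $\theta_n:=T(\mathbb{P}_n)\in\Theta$ with $P(\theta_n)=\mathbb{P}_n$, so I obtain coupled sequences $\{\mathbb{P}_n\}\subset\mathcal{D}(T)$ and $\{\theta_n\}\subset\Theta$ linked through the forward map.

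Next I would show that $\{\theta_n\}$ cannot converge in $\Theta$: if $\theta_n\to\theta^\star$, then continuity of $P$ would give $P(\theta^\star)=\lim_n P(\theta_n)=\lim_n \mathbb{P}_n=\mathbb{Q}$, placing $\mathbb{Q}\in\mathcal{R}(P)$ and contradicting the choice of $\mathbb{Q}$. Hence $T$ sends the $\mathcal{P}$-convergent sequence $\{\mathbb{P}_n\}$ to a non-convergent sequence in $\Theta$, so no continuous extension of $T$ across the limit point $\mathbb{Q}$ can exist, which is the required failure of continuity. One small auxiliary check is that $\mathbb{Q}$ really does sit outside the nominal domain $\mathcal{D}(T)=\mathcal{R}(P)\oplus\mathcal{R}(P)^\perp$: writing $\mathbb{Q}=r+c$ with $r\in\mathcal{R}(P)$ and $c\in\mathcal{R}(P)^\perp$ and taking inner product with $c$ against $\mathbb{Q}\in\overline{\mathcal{R}(P)}$ forces $\|c\|=0$, hence $\mathbb{Q}=r\in\mathcal{R}(P)$, contradicting the choice of $\mathbb{Q}$. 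Thus the discontinuity really does occur at a genuine boundary point of $\mathcal{D}(T)$ that the generalised inverse cannot continuously reach.

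The main obstacle, and the point at which the nonlinear case is genuinely more delicate than the linear one, is that I cannot invoke the Open Mapping or Closed Graph machinery that powered the bidirectional linear result; nor can I transfer completeness of $\Theta$ to $\mathcal{R}(P)$ via a homeomorphism argument, because metric completeness is not preserved by arbitrary homeomorphisms. The substitute is precisely the sequential argument sketched above: it pits the hypothesised continuity of $T$ against the given continuity of $P$ and derives a contradiction through uniqueness of sequential limits in a Hausdorff setting. This yields only the one-way implication stated in the theorem; the converse, which fails in general without linearity, is correctly omitted.
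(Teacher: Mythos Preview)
Your proposal is correct but takes a different, more constructive route than the paper. The paper's proof is a single line of general topology: since a function is continuous iff it pulls back closed sets to closed sets, and since $T^{-1}[\Theta]=\mathcal{D}(T)=\mathcal{R}(P)\oplus\mathcal{R}(P)^\perp$ fails to be closed in $\mathcal{P}$ whenever $\mathcal{R}(P)$ is non-closed, $T$ must have a point of discontinuity. You instead build an explicit witness: pick $\mathbb{Q}\in\overline{\mathcal{R}(P)}\setminus\mathcal{R}(P)$, approximate it by $\mathbb{P}_n\in\mathcal{R}(P)$, and use the continuity of $P$ to show $\theta_n=T(\mathbb{P}_n)$ cannot converge, since any limit $\theta^\star$ would force $\mathbb{Q}=P(\theta^\star)\in\mathcal{R}(P)$. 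Your argument has the virtue of locating the discontinuity concretely and of actually using the hypothesis that $P$ is continuous (which the paper's argument does not need at all). The paper's argument is terser and requires less structure, essentially only that $\mathcal{D}(T)$ is a non-closed subset of $\mathcal{P}$; on the other hand, your auxiliary check that $\mathbb{Q}\notin\mathcal{D}(T)$ is exactly the missing verification that the paper leaves implicit when it asserts $\mathcal{R}(P)\oplus\mathcal{R}(P)^\perp$ is non-closed.
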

\begin{proof}
A function is continuous \textit{iff} its inverse set image of any closed set is also closed. The inverse set image of the closed set $\Theta$ under the generalised inverse is $\mathcal{R}(P) \oplus \mathcal{R}(P)^\perp \subseteq \mathcal{P}$, which cannot be closed if $\mathcal{R}(P)$ is non-closed. 
\end{proof}
Furthermore, we have the following key result which relies on the idea of restricting models to \textit{compact} subsets\footnote{Recall that the general topological characterisation of a compact set is that every open cover has a finite subcover. In $\mathbb{R}^n$ the concept of a compact subset is the same as a closed and bounded subset, but this does not hold in more general spaces.} of the full parameter space \citep{tikhonov1977solutions,zeidler1985nonlinear}:
\begin{theorem}
Suppose $P:M \rightarrow \mathcal{R}(P)$ is a continuous, injective forward mapping defined on a compact subset $M\subseteq \Theta$. Then $P^{-1}:\mathcal{R}(P)\rightarrow M$ is continuous.
\end{theorem}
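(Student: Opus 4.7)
The plan is to reduce the statement to the classical topology fact that a continuous bijection from a compact space to a Hausdorff space is automatically a homeomorphism. By hypothesis $P$ is continuous and injective on $M$, and by construction it is surjective onto $\mathcal{R}(P)$, so $P:M\to\mathcal{R}(P)$ is already a continuous bijection. It therefore suffices to prove that the set-theoretic inverse $P^{-1}:\mathcal{R}(P)\to M$ is continuous. I would do this by verifying that $P$ is a closed map, i.e. that $P[F]$ is closed in $\mathcal{R}(P)$ for every closed $F\subseteq M$; this is equivalent to continuity of $P^{-1}$ because $(P^{-1})^{-1}[F] = P[F]$.

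The closed-map property follows from a short chain of standard facts. First, any closed subset of a compact set is compact, so $F\subseteq M$ closed implies $F$ is compact. Second, continuous images of compact sets are compact, so $P[F]$ is a compact subset of $\mathcal{R}(P)$. Third, compact subsets of a Hausdorff space are closed, so $P[F]$ is closed in $\mathcal{R}(P)$ (which is Hausdorff as a subspace of $\mathcal{P}$, a Hilbert or Banach space in the settings under consideration in this paper). Chaining these three facts gives that $P$ is a closed map, hence $P^{-1}$ is continuous.

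There is no real obstacle here: the result is essentially a direct specialisation of the standard topological theorem, and the only non-algebraic ingredient beyond compactness of $M$ is the Hausdorff property of $\mathcal{R}(P)$, which is automatic in the normed-space settings considered throughout the paper. The conceptual content — and the reason the theorem is worth stating — is not in the proof itself but in the role that the compactness assumption plays: it is precisely the replacement of the (possibly failing) closed-range condition from the linear theory by a compactness restriction on the model space $M\subseteq\Theta$ that restores continuity of the inverse in the general nonlinear setting.
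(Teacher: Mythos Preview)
Your argument is correct and is the standard proof of this classical topology result. The paper itself does not actually supply a proof of this theorem; it simply refers the reader to \citet{tikhonov1977solutions} for a proof and for extensions to generalised inverses, so there is no ``paper's own proof'' to compare against beyond noting that your closed-map argument is precisely the textbook route one would expect that reference to contain.
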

For a proof and natural extensions to generalised inverses see \citep{tikhonov1977solutions}.

The concept of compactness plays a key role in nonlinear functional analysis, and provides the means to extend many results that hold for finite-dimensional spaces to infinite-dimensional spaces and to express key approximation methods \citep{zeidler1995applied,zeidler1985nonlinear}. Compactness also underlies the concept of \textit{regularisation}, in which an ill-posed problem defined on a non-compact domain is replaced by a \textit{sequence of nested restricted problems} with the solutions of each problem restricted to one of a sequence of nested, compact subsets of the domain, $M_1 \subset M_2 ... \subset M_n ...$, and where the closure of their union equals $\Theta$, i.e. $\overline{\cup_{n=1}^{\infty}M_n} = \Theta$. Each solution provides a \textit{stable approximation} of the solution to the full problem, and the art of regularisation lies in choosing the trade-off between the goodness of approximation and stability. In reality, this choice must also take into account ill-conditioning in addition to formal ill-posedness, as discussed above. 

A good discussion of classical regularisation theory, stochastic ill-posed problems and the link between compactness and the concept of `capacity control' in statistical learning theory is given by \citet{vapnik2013nature}. Instability of nonlinear estimators has also been considered in the standard statistical literature, which we consider next.

\subsubsection{Ideas from statistics}
Essentially the same continuity concepts as discussed above have arisen in the statistical literature. In contrast to the inverse problems literature, however, much of this literature tends to assume the existence of (Fisher) consistent estimators \textit{a priori}, and hence assume (typically implicitly) identifiability. The main focus is then the further properties of these estimators, such as continuity or robustness. Again, perhaps counter to some intuitions, we find the implication that mere existence of a (Fisher) consistent estimator, and hence identifiability, does not imply estimability even in principle.

An early example is the illustration by \citet{Bahadur1956-ms} of the impossibility of fully nonparametric estimation of the mean. As shown there, it is impossible to obtain a nontrivial confidence interval for the mean, when considered as a functional defined over the set of all distributions for which this exists and is finite. Subsequent work of particular relevance here is that of \citet{Donoho1988-zf} on \textit{one-sided inference} and \cite{Tibshirani1988-jr} on \textit{sensitive parameters}. These nicely illustrate the general concepts underlying the impossibility result of \citet{Bahadur1956-ms}, and are also closely related to work in robust statistics \citep[see e.g.][]{Huber2011robust,Hampel2011-cb}, in particular the general notion of \textit{qualitative robustness} and the related (local) robustness measures derived from the \textit{influence function}, both introduced by \citet{Hampel1971-jx}. From a slightly different but related direction, \citet{vapnik2013nature} discusses the key role that classical inverse problems theory played in the development of his and colleagues' work in the theory of statistical machine learning. More recent work on stability and statistical machine learning, with strong connections to inverse problems theory, includes that by \citet{Poggio2004-on,Vito2005-xi}.

First we briefly consider the concept of sensitive parameters, introduced by \citet{Tibshirani1988-jr}, and building on \citet{Bahadur1956-ms}. They use the term functional parameter for what we call the estimator $T$ mapping (ideal) distributions back to parameter/indexing space. We will hence call these sensitive estimators; following \cite{Tibshirani1988-jr} we emphasise, however, that these are functions of \textit{population} distributions, and are not finite sample quantities.

\citet{Tibshirani1988-jr} define a sensitive parameter $T$, in our terminology called a sensitive estimator, for $\mathcal{P}$ a complete, separable metric space with distance $\delta$, and $\Theta =\mathcal{R}(T) \subseteq \mathbb{R}^n$ as:

\begin{definition}[Sensitive estimator]
\label{def:sensitive}
An estimator $T:\mathcal{P} \rightarrow \Theta$ is called sensitive with respect to metric $\delta$ iff for all $\mathbb{P} \in \mathcal{P}, \theta \in \Theta, \epsilon > 0$, there exists $\mathbb{Q} \in \mathcal{P}$ such that

\begin{equation}
    \delta(\mathbb{P},\mathbb{Q}) \leq \epsilon \text{ and } T(\mathbb{Q}) = \theta.
\end{equation}

\end{definition}
This means that, given a probability distribution $\mathbb{P} \in \mathcal{P}$ we can always find a distribution $\mathbb{Q} \in \mathcal{P}$ that is arbitrarily close to $\mathbb{P}$, in terms of the probability distribution distance $\delta$, and yet will give an arbitrary estimator value. In particular, given $\mathbb{P}$, an arbitrarily close $\mathbb{Q}$ can always be found that nevertheless makes $T(\mathbb{Q})$ arbitrarily different to $T(\mathbb{P})$. \citet{Tibshirani1988-jr} primarily take $\delta$ to be the total-variation distance, but also consider the Prohorov and contamination distances\footnote{As they note, this latter distance is not a metric.}. They show that sensitivity with respect to the contamination distance implies sensitivity with respect to the others. Similarly to \citet{Bahadur1956-ms}, a notable example of a sensitive parameter is the mean.

Hampel considered stability earlier, in the context of robust statistics and finite samples. \citet{Tibshirani1988-jr} show, however, that we can consider the $n \rightarrow \infty$ limit and obtain essentially the same conclusions. In particular, they show by considering the limiting case of the definition of qualitative robustness given by \citet{Hampel1971-jx}, that qualitative robustness for a real-valued estimator implies the estimator is continuous with respect to the Euclidean distance between the estimator values\footnote{They further note, however, that their concept of sensitivity is in a sense more extreme than discontinuity in some cases.}. Additional discussion of the relationships between qualitatively robust sequences of estimators, continuous sequences of estimators, and continuity of functional estimators is given by \citet{Hampel2011-cb,Huber2011robust}. For now we introduce a key \textit{heuristic} tool from robust statistics -- the \textit{influence function} \citep{Huber2011robust,Hampel2011-cb}, which essentially amounts to the Gateaux derivative of $T$ at $F$ in the direction of the point mass $\Delta_y$ at $y$ (though it exists under weaker conditions).

\begin{definition}[Influence function]
The influence function is defined by

\begin{equation}
    \text{\normalfont IF}(y;T,F) = \lim_{\epsilon \downarrow 0}\frac{T((1-\epsilon)F + \epsilon \Delta_y) - T(F)}{\epsilon}.
\end{equation}
where $\Delta_y$ is the point mass at $y$.
\end{definition}
It can be shown that the asymptotic variance of the estimator is, under some regularity conditions, given by

\begin{equation}
    V(T,F) = \int \text{IF}(y;T,F)^2dF(y)
\end{equation}
Smaller variance corresponds to greater (asymptotic) efficiency, and one use of the influence function is hence to help construct efficient estimators. There is, however, a \textit{trade-off between efficiency and robustness} to perturbations or sensitivity, e.g. infinitesimal contamination. As we will see, the mean is maximally efficient for location estimates under the normal family, but is also maximally sensitive to distributional contamination. The influence function also aids in these considerations. A measure of robustness in terms of the influence function is the so-called \textit{gross-error sensitivity} of $T$ at $F$:

\begin{definition}[Gross-error sensitivity]
The gross-error sensitivity is defined by:
\begin{equation}
    \gamma^* = \sup \vert \text{\normalfont IF}(y;T,F) \vert,
\end{equation}
i.e. the maximal value of the influence function, where this is taken over all $y$ for which this exists.
\end{definition}
This measures the worst influence that an infinitesimal contamination can have, and can be considered as giving \textit{an upper bound on the asymptotic bias} \citep{Hampel2011-cb}. Unbounded influence functions thus give unbounded gross-error sensitivity and unbounded asymptotic bias. As discussed by \cite{Tibshirani1988-jr}, functionals with unbounded gross error sensitivity are sensitive in their sense.

The key message for our purposes is simply that there are strong connections between the statistical notions of `sensitivity' and `robustness', and continuity of estimators\footnote{These connections are particularly strong when identifiability is assumed or generalised inverses are used. E.g. the median is robust and insensitive but is technically discontinuous unless it is uniquely defined e.g. as the minimum value for which the cumulative distribution is equal to 0.5. Identifiability and/or the concept of the generalised inverse naturally lead to such an additional condition and hence continuity of the median.}.

Finally, and again following \citet{Bahadur1956-ms}, \citet{Donoho1988-zf} gives conditions for nonlinear statistical functionals to be \textit{inestimable}, at least in the two-sided sense. In particular, Donoho considers the `truly' or `strongly' nonparametric setting, which he describes in intuitive terms as\footnote{We use our notation of $\mathcal{P}$ for families of distributions and $\mathbb{P}$ for a probability distribution to avoid conflicts of terminology with the rest of the text.}:

\begin{quote}
    Intuitively, a truly nonparametric family of distributions has the property that, when it contains a distribution $\mathbb{P}$ it also contains all other distributions which cannot be reliably distinguished from $\mathbb{P}$ at a given sample size based on any empirical test...Following this line of reasoning, we arrive at the requirement that, for $\mathcal{P}$ to be nonparametric, it should contain at least a small $\tau$ neighbourhood around essentially every point.
\end{quote}
He demonstrates, also by introducing the topological concepts necessary to characterise continuity and related notions, that a number of functionals of distributions are badly discontinuous\footnote{Which he characterises in terms of a dense graph condition, closely related to the idea of sensitivity in \cite{Tibshirani1988-jr}, as they note.} in this setting. This includes the mean, as well as several measures of complexity such as the number of modes of a density. Interestingly, while the mean cannot be given either lower or upper bounds, he demonstrates that many measures of complexity \textit{can} be given lower bounds, just not upper bounds -- e.g. that one can say a distribution must be `at least this complex', but cannot rule out more complex distributions based on any empirical test. Again the important point for our purposes is that additional, e.g. topological, concepts and continuity must be considered in order to characterise \textit{estimability}, even in the identifiable scenario.

\section{So, what is estimability?}
Which the above in mind, we can give a tentative and semi-formal characterisation of \textit{estimability}, with respect to our current setting:

\begin{definition}[Estimability]
\label{def:estimability}
Given a forward mapping $P:\Theta \rightarrow \mathcal{P}$, between models $\theta \in \Theta$ and observable distributions $\mathbb{P} \in \mathcal{P}$, a parameter $q: \Theta \rightarrow \mathcal{V}$ is estimable if it is identifiable and, in addition, the implied Fisher-consistent estimator $t:\mathcal{P} \rightarrow \mathcal{V}$ is sufficiently continuous. 
\end{definition}
This includes the case where, for example, one would say a formally well-posed but ill-conditioned estimator leads to inestimability. We have also only stated this condition as a sufficient condition, as one may even consider \textit{dropping the strict identifiability requirement} and consider instead e.g. estimation subject to bounded asymptotic bias \citep[see e.g.][]{Evans2002-as}.

\section{Examples of identifiable but inestimable quantities}
Here we consider, using special cases of our more general setting, simple examples of problems that arise in statistics, causal inference and inverse problems. In each subsection we first formulate the basic problems, then we consider stability. We largely follow \citet{vapnik2013nature} in converting statistical questions to operator equations, and \cite{Hampel2011-cb,Huber2011robust} in our discussion of influence functions and robustness. Similar examples have appeared in the econometrics literature \citep[see e.g.][and references therein]{Khan2010-bh,Lewbel2016-sk,Escanciano2018-xo,Horowitz2014-us}.

\subsection{Basic estimation problems}
\subsubsection{Conditional density estimation}
Consider the causal model described in Figure \ref{fig:dag}, where $U$ is a (potentially unmeasured) mediator for the $X\longrightarrow Y$ relationship.

\begin{figure}[h]
\begin{center}
\begin{tikzcd} 
&U \arrow[rd]&\\
X \arrow[ru]\arrow[rr] && Y
\end{tikzcd} 
\end{center}
\label{fig:dag}
\caption{Causal DAG where $U$ is a (potentially unmeasured) mediator between $X$ and $Y$.}
\end{figure}
In this case we have

\begin{equation}
    p(y | do(x)) = p(y |x)
\end{equation}
and hence our causal estimation problem reduces to a statistical estimation problem of determining $p(y|x)$. We will consider observed data $(x,y)_i,\ i = 1,..,n$, in the infinite data limit $n\rightarrow \infty$, as is standard in identifiability analysis in causal inference, but will \textit{also consider the stability at this ideal point}. This can also be considered as analysing the \textit{approach to} the limit, in addition to the behaviour \textit{at} the limit, but we again emphasise that this is essentially an intrinsic limiting property related to estimability \textit{in principle}.

First, and following \citet{vapnik2013nature} as noted above, we consider the definition of the conditional density $p(y|x)$ as the solution $f(x,y)$ to:

\begin{equation}
    \int_{-\infty}^{y}\int_{-\infty}^{x}f(x,y)dF(x')dy = F(x,y),
\end{equation}
where $F(x)$ and $F(x,y)$ are the cumulative distribution functions of $x$ and $x,y$ respectively. If the relevant densities exist we can write (informally) $dF(x) = p(x)dx$. 

As noted by \citet{vapnik2013nature,Vapnik2015-pz}, as well as e.g. \citet{Donoho1988-zf,Davies2014-dz,Lewbel2016-sk}, the cumulative distributions and the empirical data are naturally directly related in the sense that data generated by two `close' $F$ will also be close, and that the empirical $F$ converges consistently and rapidly to the `true' $F$ (e.g. via the famous Glivenko-Cantelli theorem and related bounds on convergence rates). See for example \citet{Van_der_vaart2013-lx}. Furthermore, there is a 1-1 correspondence between $F$ and the associated probability measure. Hence it is natural to take $F$ as the given and $p(y|x)$ as determined from this via the above integral equation. If a unique solution exists, then $p(y|x)$ is uniquely determined.

The above can be written as a Fredholm integral equation (i.e. with unrestricted integration limits) via:

\begin{equation}
    \int \int H(y-y')H(x-x')f(y',x')dF(x')dy' = F(x,y),
\end{equation}
where $H$ is the Heaviside (step) function with $H(z) = 0$ for $z \geq 1$ and $H(z)=0$ otherwise. This means that, abstractly, we have

\begin{equation}
    Kf = F,
\end{equation}
where the operator $K$ has the kernel $k$:

\begin{equation}
    k(x,x',y,y') = k(x-x',y-y') = H(y-y')H(x-x').
\end{equation}
Thus to determine $p(y|do(x))$ in the identifiable case, and when $p(y|do(x)) = p(y|x)$, we must solve the statistical problem defined by the above integral equation. 

By making the identification $p(y|do(x)) \leftrightarrow \theta$, and since as mentioned above the cumulative distribution function $F(x,y)$ uniquely determines the probability distribution $\mathbb{P}(X,Y)$, we can consider $Kf = F$ to represent our forward mapping, i.e. we make the correspondence 

\begin{equation}
    Kf = F\ \leftrightarrow \ P(\theta) = \mathbb{P}_{\theta}.
\end{equation}
Thus we see that the problem of determining the basic causal quantity $p(y|do(x))$ can be formulated in terms of solving a (Fredholm) integral equation. 

The above covers the case where the data contains variability/errors in $x$ as well as in $y$ and hence naturally leads to considering perturbations to \textit{both} the operator $K$ and the right-hand side $F$, i.e. to consideration of solutions to the perturbed equation 

\begin{equation}
    K_{\delta}f = F_{\delta}.
\end{equation}
This case is considered explicitly in a statistical context by e.g. \cite{vapnik2013nature,Vapnik2015-pz}, though the idea of replacing the operator itself is also at the heart of classical regularisation methods. For simplicity, however, we will instead consider the `conditional' case of estimating $p(y | x)$ for fixed/known/error-free $x$, i.e. we will essentially restrict attention to equations of the form $Kf = F_{\delta}$, where the perturbations are to the right-hand side only. Explicitly, in this setting we again have an integral equation and $p(y | x)$ is, by definition, the solution to

\begin{equation}
    \int H(y-y')f(y',x)dy' = F_{Y|X}(y|x),
\end{equation}
where $F_{Y|X}(y|x) = \mathbb{P}(Y \leq y,X=x)$ and $x$ is considered known. The problem then is to determine the solution $f(y,x) = p(y|x)$ to this equation, given $F$ and $H$. Again, we must solve a (Fredholm) integral equation. We also have the following normalisation and non-negativity constraints for probability densities:  

\begin{equation}
\int f(y,x)dy = \int p(y|x)dy = 1 \text{ for all } x,\text{ and } f(y,x) = p(y|x) \geq 0 \text{ for all } x,y.
\end{equation}
It is entirely possible for the above equations to possess unique but unstable solutions, as we will demonstrate with a simple concrete example, following \cite{vapnik2013nature,Vapnik2015-pz}. This example is also very similar to the example of numerical differentiation given by \citet{engl1996regularization} and the examples considered by \citet{Horowitz2014-us}.

\begin{example}
\label{ex:F}
Consider the case where $F_{Y|X}(y|x) = y$ over $y \in [0,1]$, i.e. we must solve

\begin{equation}
    \int H(y-y')f(y',x)dy' = y
\end{equation}
subject to the normalisation condition over $y \in [0,1]$. It is easy to verify that the solution is:

\begin{equation}
   f(y,x) = p(y | x) = 1.
\end{equation}
Consider next the solution to the equation with a small perturbation to the right-hand side:

\begin{equation}
    \int H(y-y')f_{\delta}(y',x)dy' = y + \delta \sin (\frac{1}{\delta}y)
\end{equation}
for $\delta = \frac{1}{2n\pi}, n \in \mathbb{N}$. In this case the solution is:
\begin{equation}
   f(y,x) = p(y | x) = 1 + \cos(\frac{1}{\delta}y).
\end{equation}
From this, we see that as $\delta \rightarrow 0$, i.e. $n\rightarrow \infty$, we have that the perturbed right-hand side approaches the unperturbed right-hand side, but \textit{the solution to the perturbed equation does not approach the solution to the unperturbed equation}. This example is illustrated in Figure \ref{fig:F}.
\end{example}

\begin{figure}
\begin{center}
 \begin{tikzpicture}[domain=0:4,scale=0.8]
     \draw[<->] (-1.2,0) -- (5.2,0);
     \draw[<->] (0,-0.2) -- (0,5);
     \draw[ samples=200, ultra thick, dashed] plot(\x,{2+2*cos(2*4*pi*\x/4 r)});
     \draw[ samples=200, ultra thick] plot(\x,2);
      \draw[ultra thick] (-1,0) -- (0,0);
      \draw[ultra thick] (4,0) -- (5,0);
      \draw[dashed] (4,0) -- (4,5);
      \draw (4,0) node[below] {1};
      \draw[dashed] (0,2) -- (5.2,2);
      \draw (0,2) node[left] {1};
      \draw[dashed] (0,4) -- (5.2,4);
      \draw (0,4) node[left] {2};
      \draw (0,5) node[above left] {$f$};
      \draw (5.2,0) node[below right] {$y$};
   \end{tikzpicture}
   \hspace{0.5cm}
    \begin{tikzpicture}[domain=0:4,scale=0.8]
     \draw[<->] (-1.2,0) -- (5.2,0);
     \draw[<->] (0,-0.2) -- (0,5);
     \draw[ samples=200, ultra thick, dashed] plot(\x,{\x+4*1/(2*4*pi)*sin(2*4*pi*\x/4 r)});
     \draw[ samples=200, ultra thick] plot(\x,\x);
      \draw[ultra thick] (-1,0) -- (0,0);
      \draw[ultra thick] (4,4) -- (5,4);
      \draw[dashed] (4,0) -- (4,5);
      \draw[dashed] (0,4) -- (4,4);
      \draw (0,4) node[left] {1};
      \draw (4,0) node[below] {1};
      \draw (0,5) node[above left] {$F$};
      \draw (5.2,0) node[below right] {$y$};
   \end{tikzpicture}
\end{center}
\caption{Illustration of Example \ref{ex:F}, i.e. solving an ill-posed integral equation $Kf = F$. Small perturbations to the right-hand side $F$ can give large changes to the solution $f$.}
\label{fig:F}
\end{figure}

The above example demonstrates the ill-posedness of the problem of estimating $p(y|x)$, and hence $p(y | do(x))$ in this case. Of course, ill-posedness can be addressed via regularisation methods, but this amounts to requiring additional restrictions on the model space in which solutions are sought, i.e. restrictions on the causal questions and answers. This is true even \textit{despite identifiability}, i.e. the solution can be unique but arbitrarily unstable.

\subsubsection{Regression}
Regression models can also be written directly as solutions to Fredholm integral equations in a similar manner to the above, or obtained by first estimating the conditional distribution function \citep{vapnik2013nature,Vapnik2015-pz}. Here we simply directly consider the regression function, for fixed $x$, as a functional of the conditional cumulative distribution function. This makes the arguments of \citet{Tibshirani1988-jr,Huber2011robust,Hampel2011-cb,Hampel1971-jx} concerning sensitive functionals and robust statistics directly applicable. 

We again emphasise that, in general, regression functions are not equal to what is sometimes called \textit{causal regression functions} \citep{wasserman2013all}. This latter function captures the `response' $Y$ to `treatment' $X=x$ in the continuous setting. When $X$ is randomly assigned or when e.g. the DAG in Figure \ref{fig:dag} holds, the statistical and causal regression functions are numerically equal, however. For simplicity, we will again assume this relationship holds. Hence we have identifiability, but need to further consider stability in order to assess estimability.

The regression function is defined, for each $x$, by

\begin{equation}
    r(x) = \int y p(y|x)dy = \int y dF_{Y|X}(y|x),
\end{equation}
which defines $r(x)$ as a linear functional of the conditional distribution function of the form $T(F_{Y|X}) = \int \psi dF_{Y|X}$, i.e. we have

\begin{equation}
    r(x) = T(F_{Y|X=x}).
\end{equation}
It can be shown that linear functionals of the form $T(F) = \int \psi dF$ are bounded \textit{iff} $\psi$ is bounded and continuous \citep{Huber2011robust}. 

\begin{example}
\label{ex:mean}
In our case we have $\psi = y$, which is clearly unbounded, and hence $T$ is unbounded (discontinuous). More concretely, it is easy to show that, given any $\epsilon > 0$, the mean of the $\epsilon$-contaminated distribution $Q = (1-\epsilon)F + \epsilon G$ can be made arbitrarily different to that of $F$ by appropriate choice of $G$, despite these distributions being arbitrarily close in terms of contamination distance \citep{Tibshirani1988-jr,Huber2011robust,Hampel2011-cb}. By taking $G$ to be the delta measure at $y$, $\Delta_y$, we obtain the influence function $\text{IF}(y;T,F)$, defined above. It is straightforward to show that for $T$ representing the mean, $\text{IF}(y;T,F) = y$. Hence the gross-error sensitivity, which we recall from above is the supremum of the influence function and is a measure of asymptotic bias, is equal to $+\infty$, i.e. the mean is an arbitrarily sensitive parameter in the nonparametric setting. Thus the mean is essentially \textit{inestimable} in the general setting. 
\end{example}
As discussed above, similar results include those of \citet{Bahadur1956-ms,Donoho1988-zf}. In the causal estimation context this means that any causal quantity corresponding to the mean is a \textit{sensitive causal query} in the truly nonparametric setting. Such queries can be said to be \textit{identifiable but inestimable}.

\subsubsection{Other examples: continuous confounding, average treatment effects, ill-conditioning}
The purpose of the present work is to point out basic conceptual issues in causal inference as seen from an inverse problems perspective. The above examples are intended as elementary and illustrative. Much of (structural/graphical) causal inference work to date appears to be split into conceptual abstract work on topics like identifiability, and concrete implementations in terms of restricted model classes e.g. linear regression. We note, however, that there is increasing interest in truly nonparametric causal inference. One example that was recently brought to our attention is the work by \citet{Miao2018-wi} which discusses the case of a continuous confounder. In their article, the need to solve Fredholm integral equations appears in exactly the same way as we have considered here. They analyse and solve this by the usual tools of inverse problems theory, e.g. by using the singular value decomposition. We emphasise that this leads to additional regularity conditions of exactly the sort discussed in the present article.

 A natural extension to Example \ref{ex:mean} has been considered in the econometrics literature: estimating the average treatment effect (ATE) using propensity scores \citep[see e.g.][for early work]{Rosenbaum1983-ck}. As discussed by \citet{Khan2010-bh}, this leads to a functional of the form\footnote{This functional is potentially nonlinear, but it can be considered as a linear functional of the distribution when the propensity score function is taken as a `given', with values in $(0,1)$, but otherwise arbitrary. In this case the result from \citet{Huber2011robust} -- that linear functionals are bounded \textit{iff} $\psi$ is bounded and continuous -- applies directly.} $\int \psi dF$ where $\psi$ is unbounded unless the propensity score is \textit{bounded away from zero and one}, i.e. does not just lie in $(0,1)$ but instead lies in $(b_l,b_u)$ for some $b_l > 0, b_u <1$. They specifically note that this leads to a subtle interplay between identification and estimability `that limits the practical usefulness of estimators based on these models'. As mentioned previously, \citet{Lewbel2016-sk} provides a comprehensive overview of identification concepts in econometrics and the (often subtle) link between identifiability and estimation. This particularly true of cases \citet{Lewbel2016-sk} labels `identification concepts that affect inference'. \citet{Horowitz2014-us} considers econometrics estimation problems from the perspective of ill-posed inverse problems and gives a number of examples similar to those considered here.

From a slightly different direction \citet{schulman2016stability} consider the `sensitivity of causal identification to small perturbations in the input'. They calculate a condition number for the identification algorithm for identifiable semi-Markovian models, and find that this can be extremely large, i.e. the identification algorithm can be extremely numerically unstable. This illustrates that the issues we have raised here are not restricted to the continuous ill-posed case, but also appear in the form of ill-conditioning in discrete problems. Though we have not done so yet, it would certainly be of interest to consider whether the tools of inverse problems theory can aid in determining which models lead to bad condition numbers and, potentially, how one might introduce appropriate regularisation in such cases to stabilise the algorithm. This would, of course, involve replacing the ill-conditioned problem by a `similar' well-conditioned problem and hence modifying the causal question addressed.

\section{Discussion}
We have considered the concept of estimability and its relation to the concept of identifiability. The (structural/graphical) causal inference literature, in contrast to the inverse problems, statistical, and econometrics literature, takes identifiability as synonymous with estimability. In our view this is a mistake, despite its intuitive appeal. This is supported by the numerous examples of ill-posed problems in the broader literature on inverse problems, statistics, statistical machine learning, and econometrics, of which we have presented only a small number in the present work. In addition to simple examples, we have noted that these issues appear to arise in real causal inference problems \citep{schulman2016stability,Miao2018-wi,Khan2010-bh,Lewbel2016-sk,Escanciano2018-xo}. 

The source of the disconnect between identifiability and estimability (even in principle) is a failure to consider stability to perturbations, even infinitesimally-small ones. In terms of the \citet{hadamard1902problemes} conditions for well-posed problems, identifiability represents uniqueness, but ignores stability. We propose that \textit{true estimability requires additional stability considerations}, i.e. corresponds to considering whether the problem is well-posed. 

These mathematical considerations also raise interesting philosophical questions concerning what types of causal questions are truly answerable in the general setting. For example, \citet{Donoho1988-zf} notes that the types of quantities that are estimable in the truly nonparametric setting concern lower bounds on measures of `model complexity'. Similarly, \citet{vapnik2013nature} relates his learning theory, regularisation via restriction to compact subsets, and restrictions on model class `capacity' (informally a measure of model class complexity) to Popper's concept falsifiability \citep{popper2005logic}. In short, one might say that we can potentially \textit{rule out simple causal explanations, but cannot rule out more complex causal explanations}. Regardless of this interpretation, however, thinking about stability and estimability, even in principle, requires us to go beyond mere identifiability.


\bibliography{identifiability-estimability}

\section*{Acknowledgements}
The writing of this article was stimulated, in large part, by a Twitter discussion the first author participated in concerning identifiability and estimation. The authors would like to thank the other participants: Judea Pearl, Elias Bareinboim, Carlos Cinelli, Alex Breskin, Manjari Narayan, Alexandre Patriota, Karl Rohe, Henning Strandin, Alexander D'Amour, Edward Kennedy, Iv\'{a}n D\'{i}az, and Eric Lofgren (and any others they've forgotten). Alexander D'Amour and Edward Kennedy in particular provided welcome encouragement and discussion of examples, while Iv\'{a}n D\'{i}az mentioned the article by \citet{Miao2018-wi}, and Carlos Cinelli mentioned the article by \citet{schulman2016stability}. Subsequently, Pedro H.C. Sant'Anna provided a number of other useful references from the econometrics literature, a Twitter user named `Sam' pointed to the review article by \citet{Lewbel2016-sk}, and Corey Yanofsky provided helpful feedback. The first author would also like to thank Laurie Davies for many useful discussions about statistics and stability over the last few years, and Sander Greenland for encouraging comments and helpful references.

\end{document}